\colorlet{Marino}{blue!50!black}
\colorlet{Naranja}{orange!80!black}
\colorlet{Vino}{red!50!green}
\begin{document}

\setlength{\parskip}{0.3\baselineskip}

\newtheorem{theorem}{Theorem}
\newtheorem{corollary}[theorem]{Corollary}
\newtheorem{lemma}[theorem]{Lemma}
\newtheorem{proposition}[theorem]{Proposition}
\newtheorem{definition}[theorem]{Definition}
\newtheorem{remark}[theorem]{Remark}
\renewcommand{\thefootnote}{\alph{footnote}}
\newenvironment{proof}{\smallskip \noindent{\bf Proof}: }{\hfill $\Box$\hspace{1in} \medskip \\ }

\newcommand{\sii}{\Leftrightarrow}
\newcommand{\imer}{\hookrightarrow}
\newcommand{\imerc}{\stackrel{c}{\hookrightarrow}}
\newcommand{\Con}{\longrightarrow}
\newcommand{\con}{\rightarrow}
\newcommand{\conf}{\rightharpoonup}
\newcommand{\confe}{\stackrel{*}{\rightharpoonup}}
\newcommand{\pbrack}[1]{\left( {#1} \right)}
\newcommand{\sbrack}[1]{\left[ {#1} \right]}
\newcommand{\key}[1]{\left\{ {#1} \right\}}
\newcommand{\dual}[2]{\langle{#1},{#2}\rangle}

\newcommand{\R}{{\mathbb R}}
\newcommand{\N}{{\mathbb N}}
\newcommand{\cred}[1]{\textcolor{red}{#1}}

\title{\bf Analyticity for Double Wall Carbon Nanotubes  Modeled as Timoshenko Beams with  Kelvin-Voigt and Intermediate Damping}
\author{Fredy Maglorio Sobrado  Su\'arez$^*$ \let\thefootnote\relax\footnote{ corresponding author:{\it e-mail:}{\rm fredy@utfpr.edu.br} (Fredy Maglorio Sobrado  Su\'arez)}\\
	Gilson Tumelero\\
Jackson Luchesi\\	
Marieli Musial Tumelero\\
  Santos Richard Wieller Sanguino Bejarano\\
{\small Department of  Mathematics, The  Federal University of Technological of Paran\'a, Brazil}
}

\date{ }

\maketitle
   \begin{center} `` In Memory of Susana and Maglorio"  \end{center}

\begin{abstract} 
	\noindent This  manuscript studies  a model of double-walled carbon nanotubes using two Timoshenko beams which are coupled by the  Van der Walls force $(y-u)$. Kelvin-Voigt type dampings  $(u_x-v)_{xt}$ and $(y_x-z)_{xt}$ and fractional dampings  $(-\partial_{xx})^\alpha v_t$ and $(-\partial_{xx})^\beta z_t$ in both beams have been considered. We show that our proposed model is well established and that the semigroup associated is exponentially stable and analytical for any $(\alpha, \beta) \in [0, 1]^2$. As a consequence of this, a result on the analyticity of a Timoshenko System is obtained.
\end{abstract} 

\bigskip
{\sc keyword:}   Stability, Exponential Decay,  Regularity, Analyticity,  DWCNTs System.

\setcounter{equation}{0}

\section{Introduction}
  With the advancement of nanotechnology, in recent decades, scientists have focused their efforts on discovering materials with nanomolecular structures. In this direction, an important discovery occurred in 1987, when the structures called carbon nanotubes (CNT) were discovered. These structures were presented to the scientific community in 1991 \cite{SL1991} as multi-walled carbon nanotubes (MWCNT).
 Carbon nanotubes are cylindrical macromolecules composed of carbon atoms in a periodic hexagonal matrix (see Figure 1) with $sp^2$ hybridization, similar to graphite \cite{HDTR08}. They are made as rolled sheets of graphene and can be as thick as a single carbon atom. They receive this name due to their tubular morphology in nanometric dimensions ($1nm=10^{-9}m$).
 \begin{figure}[ht]
 	\center
 	\begin{minipage}{0.4\textwidth}
 		\center
 		\includegraphics[width=0.4\textwidth]{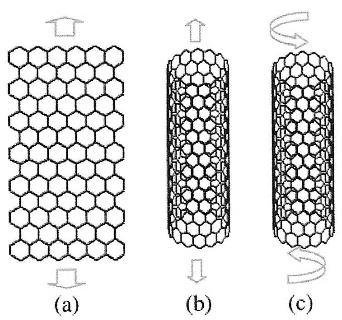}
 		\caption{Nanotube structure  \cite{DaSilva}} 
 		\label{centralizada}
 	\end{minipage}
 \end{figure}
 
 An analysis of the main properties of CNTs was presented in \cite{YM2003}, the study confirmed that CNTs have excellent properties mechanical, electrical, electronic and chemical. They are about ten times stronger and six times lighter than steel. They transmit electricity like a superconductor and are excellent temperature transmitters. Due to their electro-electronic and mechanical properties superior to those of the materials currently used, carbon nanotubes are already being used in products and equipment that require nanoscale structures.  
In \cite{SBW2011}, CNTs were classified in three ways: single-walled carbon nanotubes (SWCNT), double-walled carbon nanotubes (DWCNT) and  multi-walled carbon nanotubes (MWCNT). 

  The authors also noted that DWCNTs are an emerging class of carbon nanostructures and represent the simplest way to study the physical effects of coupling between the walls of carbon nanotubes.

In the future, CNTs should become the base material for nanoelectronics, nanodevices, and nanocomposites. The main problems that must be overcome for this to happen are the difficult controlled experiments at the nanoscale: the high cost of molecular dynamics simulations and the high time consumption of these simulations. In this direction, the Timoshenko beam model has been extensively used to understand continuum mechanics models of this material.
 Initially, CNTs were modeled using the Euler-Bernoulli beam model. However, this model ignores the effects of shear and rotation. According to \cite{YM2002, YM2003} the vibrations in carbon nanotubes are animated by high frequencies, higher than $1Thz$. In \cite {YM2005}, the authors showed that the effects of rotational inertia and shear are significant in the study of terahertz frequencies ($10^{12}$) becomming the Euler-Bernoulli beam model to CNTs questionable. Therefore, the Timoshenko-beam model should be used for terahertz vibrations of CNTs. For double-walled nanotubes DWCNT or multi-walled concentric nanotubes MWCNT, the most widely used continuum models in the literature assume that all nested MWCNT tubes remain coaxial during deformation and can therefore be described by a continuous model. However, this model cannot be used to describe the relative vibration between adjacent MWCNT tubes. In 2003, \cite{YM2003} proposed that concentric tube fittings be considered individual beams and that the deflections of all nested tubes be coupled by the Van der Waals interaction force between two adjacent tubes \cite{JC2007, JC2008}. Thus, each of the inner and outer tubes is modeled as a beam.

 In the pioneering work on the carbon nanotube model by Yoon et al. \cite{YM2004}, the authors proposed a coupled system of partial differential equations inspired by the Timoshenko beam model to model DWCNT. The model consists of the following equations:
                                
\begin{eqnarray*}
 \rho A_1\dfrac{\partial^2 Y_1}{\partial t^2}-\kappa G A_1\bigg(\dfrac{\partial^2 Y_1}{\partial x^2}-\dfrac{\partial \varphi_1}{\partial x}   \bigg)-P & = & 0,\\
 \rho I_1 \dfrac{\partial^2 \varphi_1}{\partial t^2}-E I_1 \dfrac{\partial^2 \varphi_1}{\partial x^2}-\kappa G A_1 \bigg( \dfrac{\partial Y_1}{\partial x}-\varphi_1 \bigg ) & = & 0,\\
 \rho A_2\dfrac{\partial^2 Y_2}{\partial t^2}-\kappa G A_2\bigg(\dfrac{\partial^2 Y_2}{\partial x^2}-\dfrac{\partial \varphi_2}{\partial x}   \bigg)+P & = & 0,\\
 \rho I_2 \dfrac{\partial^2 \varphi_2}{\partial t^2}-E I_2 \dfrac{\partial^2 \varphi_2}{\partial x^2}-\kappa G A_2 \bigg( \dfrac{\partial Y_2}{\partial x}-\varphi_2 \bigg ) & = & 0,
 \end{eqnarray*}
 where $Y_i$ and $\varphi_i$ ($i=1,2$) represent respectively the total deflection and the inclination due to the bending of the nanotube $i$ and the constants $I_i$, $A_i$ denote the moment of inertia and the cross-sectional area of the tube $i$,  respectively, and $P$ is the Van der Waals force acting on the interaction between the two tubes per unit of axial length. Also according to \cite{YM2004}, it can be seen that the deflections of the two tubes are coupled through the Van der Waals interaction $P$ (see \cite{Timoshenko1921}) between the two tubes, and as the tubes inside and outside of a DWCNTs are originally concentric, the Van der Waals interaction is determined by the spacing between the layers. Therefore, for a small-amplitude linear vibration, the interaction pressure at any point between the two tubes linearly depends on the difference in their deflection curves at that point, that is, it depends on the term
 \begin{equation}\label{Eq14ARamos}
 P=\jmath (Y_2-Y_1).
 \end{equation}

 In particular, the Van der Waals interaction coefficient $\jmath$ for the interaction pressure per unit axial length can be estimated based on an effective interaction width of the tubes as found in \cite{Ru2000, YM2003A}. Thus, this model treats each of the nested and concentric nanotubes as individual Timoshenko beams interacting in the presence of Van der Waals forces (see Figure: \eqref{DWCNTs}).
 \begin{figure}[ht]
\begin{minipage}{1\textwidth}
\center
\includegraphics[width=0.95\textwidth]{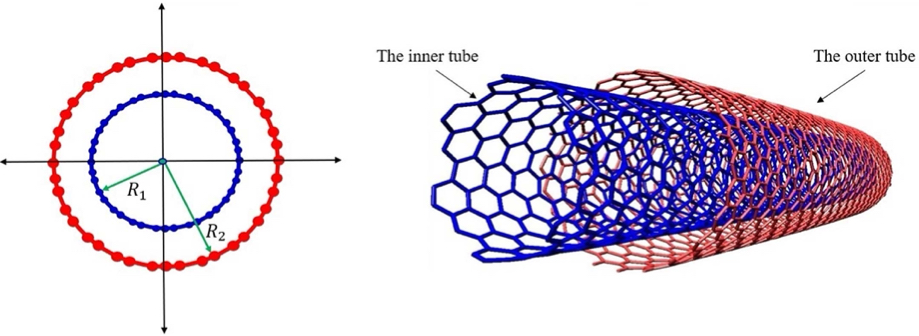}
\caption{2D and 3D Representations of the Double Wall Carbono Nanotubes Model \cite{Ramos2023CNTs}}
\label{DWCNTs}
\end{minipage}
\end{figure}
 
The study of asymptotic behavior and/or regularity for DWCNTs models, or for DWCNTs systems coupled to the heat equation governed by Fourier's law (DWCNTs-Fourier), are recent. In 2015, in the thesis \cite{Nunes2015}, the first results of the asymptotic behavior for DWCNTs models emerged, given by:
\begin{eqnarray}
\label{Eq0.1}
\rho_1 \varphi_{tt}-\kappa_1(\varphi_x-\psi)_x-\jmath(y-\varphi)+\alpha_0\varphi_t=0\quad{\rm in} \quad (0,l)\times (0,\infty),\\
\label{Eq0.2}
\rho_2\psi_{tt}-b_1\psi_{xx}-\kappa_1(\varphi_x-\psi)+\alpha_1\psi_t=0\quad{\rm in} \quad (0,l)\times (0,\infty),\\
\label{Eq0.3}
\rho_3y_{tt}-\kappa_2(y_x-z)_x+\jmath(y-\varphi)+\alpha_2y_t=0\quad{\rm in} \quad (0,l)\times (0,\infty),\\
\label{Eq0.4}
\rho_4z_{tt}-b_2z_{xx}-\kappa_2(y_x-z)+\alpha_3 z_t=0\quad{\rm in} \quad (0,l)\times (0,\infty),
\end{eqnarray}
with the initial conditions
\begin{eqnarray}
\label{Eq01.5}
\varphi(x,0)=\varphi_0(x),\quad \varphi_t(x,0)=\varphi_1(x),\quad \psi(x,0)=\psi_0(x),\quad \psi_t(x,0)=\phi_1(x) & {\rm in}\quad  (0,l),
\\
\label{Eq01.6}
y(x,0)=y_0(x),\quad y_t(x,0)=y_1(x),\quad z(x,0)=z_0(x),\quad z_t(x,0)=z_1(x) & {\rm in}\quad (0,l),
\end{eqnarray}
and subject to boundary conditions
\begin{eqnarray}
\label{Eq0.5}
\varphi(0,t)=\varphi(l,t)=\psi(0,t)=\psi(l,t)=0\quad{\rm for\quad all}\quad t>0,\\
\label{Eq0.6}
y(0,t)=y(l,t)=z(0,t)=z(l,t)=0\quad{\rm for\quad all}\quad t>0.
\end{eqnarray}

For the case that $\alpha_0=0$ and $\alpha_i>0$, for $i=1, 2, 3$, in \cite{Nunes2015} the author demonstrated the lack of exponential decay of the semigroup $(S(t))_{t\geq 0}$ associated with the system \eqref{Eq0.1}--\eqref{Eq0.6}, when $\frac{\rho_1}{\kappa_1}\not =\frac{\rho_2}{b_1}$ and $\jmath\big(\frac{\rho_2}{b_1}-\frac{\rho_1}{\kappa_1}\big)\not =\frac{\kappa_1}{ b_1}$. Furthermore, it has been proved that $(S(t))_{t\geq 0}$ is exponentially stable if $\chi= \frac{\kappa_1\rho_2-b_1\rho_1}{\kappa_1^2-\jmath \rho_2\kappa_1+\jmath b_1\rho_1}=0$ and $(S(t))_{t\geq 0}$ is polynomially stable with optimal rate $O(t^{-\frac{1}{2}})$ if $\chi\not=0$. In addition, in Chapter 4 of \cite{Nunes2015}, making use of the finite difference method, the author validates numerically the previously demonstrated results and  presents graphs of other dissipation cases.

 In 2023 \cite{MDCL2023}, the authors studied the one-dimensional equations for the double-wall carbon nanotubes modeled by coupled Timoshenko elastic beam system with nonlinear arbitrary localized damping:
 \begin{eqnarray}
\label{Eq0.20}
\rho_1 \varphi_{tt}-\kappa_1(\varphi_x-\psi)_x-\jmath(y-\varphi)+\alpha_1(x)g_1(\varphi_t)=0\quad{\rm in} \quad (0,l)\times (0,\infty),\\
\label{Eq0.21}
\rho_2\psi_{tt}-b_1\psi_{xx}-\kappa_1(\varphi_x-\psi)+\alpha_2(x)g_2(\psi_t)=0\quad{\rm in} \quad (0,l)\times (0,\infty),\\
\label{Eq0.22}
\rho_3y_{tt}-\kappa_2(y_x-z)_x+\jmath(y-\varphi)+\alpha_3(x)g_3(y_t)=0\quad{\rm in} \quad (0,l)\times (0,\infty),\\
\label{Eq0.23}
\rho_4z_{tt}-b_2z_{xx}-\kappa_2(y_x-z)+\alpha_4(x)g_4 (z_t)=0\quad{\rm in} \quad (0,l)\times (0,\infty),
\end{eqnarray}
where the localizing functions $\alpha_i(x)$ are supposed to be smooth and nonnegative, while the nonlinear functions $g_i(x), i = 1,\cdots, 4$, are continuous and monotonic increasing. The authors showed that the system \eqref{Eq0.20}--\eqref{Eq0.23} subject to Dirichlet boundary conditions with damping placed on an arbitrary small support, not quantized at the origin, leads to uniform (time asymptotic) decay rates for the energy function of the system.

In the same direction of this last paper, we would like to mention the work of Shubov and Rojas-Arenaza \cite{SR2010} where they considered the system \eqref{Eq0.20}-\eqref{Eq0.23}  with $\alpha_i(x)=1, g_i(s)=s, i=1,\cdots,4$,  subject to initial conditions \eqref{Eq01.5}-\eqref{Eq01.6} and boundary conditions:
\begin{equation}\label{Eq2.1MG}
\left\{\begin{array}{cc}
\kappa_1(\varphi_x-\psi)(l,t)=-\rho_2\gamma_1\varphi_t(l,t) & t\geq 0,\\
b_1\psi_x(l,t)=-\rho_2\gamma_2\psi_t(l,t),  & t\geq 0,\\
\kappa_2(y_x-z)(l,t)=\rho_4\gamma_3y_t(l,t), & t\geq 0,
\\
b_2z_x(l,t)=-\rho_4\gamma_4z_t(l,t), & t\geq 0.
\end{array}\right.
\end{equation}

Firstly, the authors proved  that the energy associated to the system,  with boundary conditions \eqref{Eq2.1MG}, is decreasing if $\jmath = 0$. Next, they proved that the semigroup generator is an unbounded non-self-adjoint operator with a compact resolvent.

In 2024 Guesmia \cite{AGuesmia24} studied the stability and well-posedness for double-walled carbon nanotubes modeled as two one-dimensional linear Timoshenko beams coupled in a bounded domain under friction damping. In this work the author studies good approach by applying the theory of semigroups of linear operators. Furthermore, it has been proven several strong non-exponential, exponential, polynomial and non-polynomial stability results depending on the number of friction dampers, their position and some connections between the coefficients. In some cases, optimization of the polynomial decay rate is also demonstrated. The tests of these stability results are based on a combination of the energy method and the frequency domain approach.

Recently, 2023/2024, two new studies also emerged for the DWCNTs-Fourier system: One of them is the system studied in \cite{Ramos2023CNTs}, in 2023. The system is given by:
\begin{eqnarray}
\label{Eq0.10}
\rho_1 \varphi_{tt}-\kappa_1(\varphi_x-\psi)_x-\jmath(y-\varphi)+\gamma_1\varphi_t=0\quad{\rm in} \quad (0,l)\times (0,\infty),\\
\label{Eq0.11}
\rho_2\psi_{tt}-b_1\psi_{xx}-\kappa_1(\varphi_x-\psi)+\delta\theta_{xx}=0\quad{\rm in} \quad (0,l)\times (0,\infty),\\
\label{Eq0.12}
\rho_3y_{tt}-\kappa_2(y_x-z)_x+\jmath(y-\varphi)+\gamma_2y_t=0\quad{\rm in} \quad (0,l)\times (0,\infty),\\
\label{Eq0.13}
\rho_4z_{tt}-b_2z_{xx}-\kappa_2(y_x-z)+\gamma_3 z_t=0\quad{\rm in} \quad (0,l)\times (0,\infty),\\
\label{Eq0.14}
\rho_5\theta_t-K\theta_{xx}+\beta\psi_t=0\quad {\rm in}\quad (0,l)\times (0,\infty),
\end{eqnarray}
subject to boundary conditions 
\begin{eqnarray}
	\label{Eq0.5a}
	\varphi(0,t)=\varphi(l,t)=\psi(0,t)=\psi(l,t)=0\quad{\rm for\quad all}\quad t>0,\\
	\label{Eq0.6a}
	y(0,t)=y(l,t)=z(0,t)=z(l,t)=0\quad{\rm for\quad all}\quad t>0,\\
\label{Eq0.15}
\theta(0,t)=\theta(l,t)=0\quad{\rm for\quad all}\quad t>0.
\end{eqnarray}
\quad\, Note that the system \eqref{Eq0.10}--\eqref{Eq0.15} presents three friction dissipators (weak damping): $\gamma_1\varphi_ t,\gamma_2y_t$ and $\gamma_3 z_t$. The authors applied semigroup theory of linear operators to demonstrate the exponential stabilization of the semigroup $S(t)$ associated with the system \eqref{Eq0.10}--\eqref{Eq0.15}, and their results are independent of the relationship between the coefficients. Furthermore, they analyzed the totally discrete problem using a finite difference scheme, introduced by a space-time discretization that combines explicit and implicit integration methods. The authors also show the construction of numerical energy and simulations that validate the theoretical results of exponential decay and convergence rates.

In 2024, Su\'arez et al. \cite{Suarez2024}, studied two DWCNT-Fourier systems provided with fractional damping. For the first system the Exponential Decay (Stability) is demonstrated and for the second system they approched the stability and regularity. The first system is a generalization of the model presented in \cite{Ramos2023CNTs}. In this system the authors consider three fractional damping given by: $\gamma_1(-\partial_{xx})^{\tau_1}\varphi_t$, $\gamma_2(- \partial_{xx})^{\tau_2}y_t$ and $\gamma_3(-\partial_{xx})^{\tau_3}z_t$, for the parameters $\tau_i, i=1,2,3$, varying in the interval $[0,1]$. Note that when $(\tau_1,\tau_2,\tau_3)=(0,0,0)$ the system is the one studied in \cite{Ramos2023CNTs}. The second system studied in \cite{Suarez2024} is given by:
\begin{eqnarray*}
\label{Eq2.1}
\rho_1 \varphi_{tt}-\kappa_1(\varphi_x-\psi)_x-\jmath(y-\varphi)+\gamma_1(-\partial_{xx})^{\beta_1}\varphi_t=0\quad{\rm in} \quad (0,l)\times (0,\infty),\\
\label{Eq2.2}
\rho_2\psi_{tt}-b_1\psi_{xx}-\kappa_1(\varphi_x-\psi)+\delta \theta_{xx}=0\quad{\rm in} \quad (0,l)\times (0,\infty),\\
\label{Eq2.3}
\rho_3y_{tt}-\kappa_2(y_x-z)_x+\jmath(y-\varphi)+\gamma_2(-\partial_{xx})^{\beta_2}y_t=0\quad{\rm in} \quad (0,l)\times (0,\infty),\\
\label{Eq2.4}
\rho_4z_{tt}-b_2z_{xx}-\kappa_2(y_x-z)+\gamma_3(-\partial_{xx})^{\beta_3}z_t=0\quad{\rm in} \quad (0,l)\times (0,\infty),\\
\label{Eq2.5}
\rho_5\theta_t-K\theta_{xx}-\delta \psi_{xxt}=0\quad{\rm in} \quad (0,l)\times (0,\infty),
\end{eqnarray*}
subject to boundary conditions
\begin{eqnarray*}
\label{Eq2.7}
\varphi(0,t)=\varphi(l,t)=\psi(0,t)=\psi(l,t)=0\quad{\rm for\quad all}\quad t>0,\\
\label{Eq2.8}
y(0,t)=y(l,t)=z(0,t)=z(l,t)=0\quad{\rm for\quad all}\quad t>0,\\
\label{Eq2.9}
\theta(0,t)=\theta(l,t)=0\quad{\rm for\quad all}\quad t>0,
\end{eqnarray*}
and initial conditions
\begin{eqnarray*}
\label{Eq2.10}
\varphi(x,0)=\varphi_0(x),\; \varphi_t(x,0)=\varphi_1(x),\; \psi(x,0)=\psi_0(x),\quad {\rm for}\;x\in(0,l),\\
\label{Eq2.11}
\psi_t(x,0)=\psi_1(x),\; y(x,0)=y_0(x),\; y_t(x,0)=y_1(x), \quad {\rm for}\;x\in(0,l),\\
\label{Eq2.12}
z(x,0)=z_0(x),\; z_t(x,0)=z_1(x),\; \theta(x,0)=\theta_0(x),\quad{\rm for}\;x\in(0,l).
\end{eqnarray*}

In the recent paper published in 2025 \cite{Apalara2025}, the authors studied  the well-posedness and stability of DWCNTs, modeled as a double-Timoshenko system coupled with Lord-Shulman thermoelasticity. They proved the existence of a unique solution that is exponentially stable irrespective of any assumptions on the coefficient of the system:
\begin{eqnarray*}
	\label{Eq2.1a}
	\rho_1 \varphi_{tt}-\kappa_1(\varphi_x-\psi)_x-\jmath(y-\varphi)+\gamma_1\varphi_t=0\quad{\rm in} \quad (0,l)\times (0,\infty),\\
	\label{Eq2.2b}
	\lambda_1\psi_{tt}-b_1\psi_{xx}-\kappa_1(\varphi_x-\psi)+\delta( \theta_{x}+\tau\theta_{tx})=0\quad{\rm in} \quad (0,l)\times (0,\infty),\\
	\label{Eq2.3c}
	\rho_2y_{tt}-\kappa_2(y_x-z)_x+\jmath(y-\varphi)+\gamma_2y_t=0\quad{\rm in} \quad (0,l)\times (0,\infty),\\
	\label{Eq2.4d}
	\lambda_2z_{tt}-b_2z_{xx}-\kappa_2(y_x-z)+\gamma_3z_t=0\quad{\rm in} \quad (0,l)\times (0,\infty),\\
	\label{Eq2.5e}
	\rho_3(\theta_{t}+\tau\theta_{tt})-K\theta_{xx}-\beta \psi_{xt}=0\quad{\rm in} \quad (0,l)\times (0,\infty),
\end{eqnarray*}
subject to initial and Dirichlet boundary conditions:
\begin{eqnarray*}
	\label{Eq2.7f}
	\varphi(0,t)=\varphi(l,t)=\psi(0,t)=\psi(l,t)=0\quad{\rm for\quad all}\quad t\geq0,\\
	\label{Eq2.8g}
	y(0,t)=y(l,t)=z(0,t)=z(l,t)=0\quad{\rm for\quad all}\quad t\geq0,\\
	\label{Eq2.9h}
	\theta_x(0,t)=\theta_x(l,t)=0\quad{\rm for\quad all}\quad t\geq0,\\
	\label{Eq2.10i}
	\varphi(x,0)=\varphi_0(x),\; \varphi_t(x,0)=\varphi_1(x),\; \psi(x,0)=\psi_0(x),\; \psi_t(x,0)=\psi_1(x),\quad {\rm for}\;x\in(0,l),\\
	\label{Eq2.11j}
	\; y(x,0)=y_0(x),\; y_t(x,0)=y_1(x),\; z(x,0)=z_0(x),\; z_t(x,0)=z_1(x), \quad {\rm for}\;x\in(0,l),\\
	\label{Eq2.12k}
	 \theta(x,0)=\theta_0(x,t),\;  \theta_t(x,0)=\theta_1(x,t),\quad{\rm for}\;x\in(0,l).
\end{eqnarray*}

Since the mathematical model of DWCNTs studied here is given by the coupling of two Timo-shenko beams through the Van de Waals interaction force, it is natural to think that previous studies of stability and regularity of Timoshenko beams lead us to obtain results of stability and regularity of the DWCNTs.  We will cite some of these works below.

In 2005,  Raposo et al.\cite{Raposo2005} studied the Timoshenko system, provided with two frictional dissipations $\varphi_t$ and $\psi_t$, and proved that the semigroup associated with the system decays exponentially. In 2016, for the same Timoshenko system, when the stress-strain constitutive law is of Kelvin-Voigt type, given by
\begin{equation}\label{Dtensores}
S=\kappa(\varphi_x+\psi)+\gamma_1(\varphi_x+\psi)_t\qquad\text{and}\qquad M=b\psi_x+\psi_{xt}, 
\end{equation} 
Malacarne and Rivera in \cite{AMJR2016} showed that $S(t)$ is analytical if and only if the viscoelastic damping is present in both the shear stress and the bending moment. Otherwise, the corresponding semigroup is not exponentially stable, independent of the choice of coefficients. They also showed that the solution decays polynomially to zero as $t^{-1/2}$, no matter where the viscoelastic mechanism is effective and that the rate is optimal whenever the initial data are taken on the domain of the infinitesimal operator. In 2023, Su\'arez \cite{Suarez2023} studied the regularity of the model given in \cite{Raposo2005}, replacing the two damping weaks $\varphi_t$ and $\psi_t$  with fractional dampings $(-\partial_{xx})^\tau\varphi_t$ and $(-\partial_{xx})^\sigma\psi_t$, where the parameters $\tau$ and $\sigma$ belong to interval $[0,1]$, and proved the existence of Gevrey classes $s>\frac{r+1}{2r}$ with $r=\min\{\tau,\sigma\}$, for all $\tau, \sigma \in (0,1)$, of the semigroup $S(t)$ associated to the system, and analyticity of $S(t)$ when the two parameters $\tau$ and $\sigma$ vary in the interval $[1/2,1]$.

For some references on stability  and regularity of coupled systems, we recommend consulting \cite{AmmariShelTebou2022,MH2019,SCRT1990,DOroPata2016,Tebou2020,KLiuH2021,KLiuTebou2022,LT1998A,LiuR95,FLF2023,HC2019,HSLiuRacke2019,LTebou2013,Tebou2021}

Motivated by the recent works of Malacarne-Rivera, Su\'arez and Guesmia: \cite{AMJR2016, Suarez2023, AGuesmia24} and considering similar dissipations as given in \eqref{Dtensores}, we study the system given by:
\begin{eqnarray}
\label{IEq1.1}
\rho_1 u_{tt}-\kappa_1(u_x-v)_x-m(y-u)-\gamma_1(u_x-v)_{xt}=0\quad{\rm in} \quad (0,l)\times (0,\infty),\\
\label{IEq1.2}
\rho_2v_{tt}-b_1v_{xx}-\kappa_1(u_x-v)-\gamma_1(u_x-v)_t+\gamma_2(-\partial_{xx})^\alpha v_t=0\quad{\rm in} \quad (0,l)\times (0,\infty),\\
\label{IEq1.3}
\rho_3y_{tt}-\kappa_2(y_x-z)_x+m(y-u)-\gamma_3(y_x-z)_{xt}=0\quad{\rm in} \quad (0,l)\times (0,\infty),\\
\label{IEq1.4}
\rho_4z_{tt}-b_2z_{xx}-\kappa_2(y_x-z)-\gamma_3(y_x-z)_t+\gamma_4(-\partial_{xx})^\beta z_t=0\quad{\rm in} \quad (0,l)\times (0,\infty),
\end{eqnarray}
where $m, b_1, b_2,\kappa_1, \kappa_2,  \rho_1,\rho_2,\rho_3,\rho_4, \gamma_1,\gamma_2,\gamma_3,\gamma_4$ are real positive numbers, $u_0,u_1,v_0,v_1, y_0,\\ y_1, z_0, z_1$ are given functions and $\alpha$ and $\beta$ belong to $[0,1]$. Subject to boundary conditions 
\begin{eqnarray}
\label{IEq1.5}
\quad u(0,t)=u(l,t)=v_x(0,t)=v_x(l,t)=0,\quad\quad t>0,\\
\label{IEq1.6}
y(0,t)=y(l,t)=z_x(0,t)=z_x(l,t)=0,\quad\quad t>0,
\end{eqnarray}
and initial data
\begin{eqnarray}
\label{IEq1.7}
u(x,0)=u_0(x),\; u_t(x,0)=u_1(x),\; v(x,0)=v_0(x),\quad \quad x\in(0,l),\\
\label{IEq1.8}
v_t(x,0)=v_1(x),\; y(x,0)=y_0(x),\; y_t(x,0)=y_1(x), \quad\quad x\in(0,l),\\
\label{IEq1.9}
z(x,0)=z_0(x),\;z_t(x,0)=z_1(x),\quad\quad x\in(0,l).
\end{eqnarray}

The main contribution of this article is to show that the corresponding semigroup $S(t)$  is analytic for all $(\alpha,\beta)\in [0, 1]^2=[0,1]\times[0,1]$. Many previous works on Timoshenko beams or DWCNTs obtain analyticity only in
	restricted subregions of the parameters (for example, $\alpha,\beta\geq \frac{1}{2}$
	as in the work by Suárez
	in 2023 \cite{Suarez2023} or $\alpha=1$ as in \cite{JAIB2025,AMJR2016}).
	Here, it is demonstrated that the entire region of the square $[0,1]^2$ is covered.
	This means that, regardless of the intensity of the fractional dissipation (from strong
	classical Kelvin-Voigt dissipations to cases of weak frictional dissipation), the system
	maintains analytic behavior.
	
In addition, as a consequence of this result, we obtain analyticity for all $\alpha\in[0,1]$ for the following Timoshenko System:
\begin{eqnarray*}
	\label{TTEq1.1}
	\rho_1 u_{tt}-\kappa_1(u_x-v)_x-\gamma_1(u_x-v)_{xt}=0\quad{\rm in} \quad (0,l)\times (0,\infty),\\
	\label{TTEq1.2}
	\rho_2v_{tt}-b_1v_{xx}-\kappa_1(u_x-v)-\gamma_1(u_x-v)_t+\gamma_2(-\partial_{xx})^\alpha    v_t=0\quad{\rm in} \quad (0,l)\times (0,\infty).
\end{eqnarray*}
subject to boundary conditions 
\begin{equation*}
	\label{TTq1.3}
	\quad u(0,t)=u(l,t)=v_x(0,t)=v_x(l,t)=0,\quad\quad t>0,
\end{equation*}
and initial data
\begin{equation*}
	\label{TTEq1.4}
	u(x,0)=u_0(x),\; u_t(x,0)=u_1(x),\; v(x,0)=v_0(x),\; v_t(x,0)=v_1(x),\quad \quad x\in(0,l).
\end{equation*}

{\bf Why is analyticity in comprehensive coverage of the parameter region relevant?}
\begin{itemize}
\item {\bf Strengthening of Solution Regularity:}
Exponential stability already guarantees that the energy decays uniformly over time.
Analyticity, however, is a much stronger result: it implies that the temporal evolution
of the system is infinitely differentiable and that the solutions belong to higher regularity
classes (including Gevrey classes in particular cases).
This is essential for control problems, numerical approximation, and spectral theory.
\item {\bf Implications for Nanotechnology:} 
In physical models of nanotubes, $\alpha$ and $\beta$ represent different dissipation regimes associated with microscopic mechanisms of memory and internal damping.
To prove that the model is analytic across the entire range of
these parameters increases its practical applicability and reliability in multiscale
simulations. In particular, analyticity ensures that the system reacts smoothly to external perturbations, which is crucial for applications in nanoelectronics and resonant devices on the THz scale.
\end{itemize}
This advance positions the model among the most robust ever obtained for DWCNTs (and Timoshenko Systems), overcoming limitations of previous works and paving the way for studies on optimal
control, stable numerical simulation, and thermoelastic or delayed extensions.

This manuscript is organized as follows. Section 1 is Introduction. Section 2 contains notations and auxiliary facts. In Section 3 we study the well-posedness making use of semigroup theory. Section 4 is dedicated to the stability of $S(t)$ and we show that the system \eqref{IEq1.1}-\eqref{IEq1.9} decays exponentially when both of the parameters $\alpha, \beta$ vary in the interval $[0,1]$, this study is also approached using the theory of linear semigroups. In Section 5, we prove the analyticity of the semigroup $S(t)$ associated with the system \eqref{IEq1.1}-\eqref{IEq1.9} for the parameters $(\alpha,\beta)$ in the square $[0,1]^ 2$ using semigroup theory. Section 6 contains analyticity results for Timoshenko systems as a particular case. Finally, Section 7 is a conclusion.

\section{Notations and Auxiliary Facts }
Let $l>0$ be a finite number. We denote the standard Hilbert space $L^2(0, l)$ with inner product and norm:
\begin{equation*}
	\dual{u}{v}=\int_{0}^{l}u(x)\overline{v(x)} dx,\,\,\,\,\, \|u\|=\Big(\int_{0}^{l}|u(x)|^2dx\Big)^{\frac{1}{2}}.
\end{equation*}
The closed subspace of functions with null mean of the $L^2(0,l)$ is denoted by
\begin{equation*}
	L_*^2(0,l)=\bigg\{ u\in L^2(0,l)\colon \int_0^l u(x)dx=0\bigg\}.
\end{equation*}
It is a Hilbert space with the $L^2-$norm. The space $H_0^1(0, l)$ stands for the usual Sobolev space and $H_*^1(0,l) = H^1(0,L)\cap L_*^2(0, l)$ equipped with the norms $\|u\|_{H_0^1(0, l)}=\|u\|_{H_*^1(0, l)}=\|u_x\|$.

The operators
\begin{equation}\label{OAAaste}
\left\{\begin{array}{l}
A=-\partial_{xx}\colon D(A)\subset L^2(0,l)\to L^2(0,l),\\
A_*=-\partial_{xx}\colon D(A_*)\subset L^2_*(0,l)\to L^2_*(0,l),
\end{array}\right.
\end{equation}
defined in the subspaces
\begin{equation}\label{DOAAaste}
\left\{\begin{array}{l}
 D(A)=H^2(0,l)\cap H_0^1(0,l),\\
D(A_*)=\{ v\in H^2(0,l)\cap L_*^2(0,l)\colon v_x(0)=v_x(l)=0\},

\end{array}\right.
\end{equation}
are positive seft-adjoint and have a compact inverse. Therefore, the operators $A^\sigma, A_*^\sigma$ are bounded for $\sigma\leq 0$ and they are positive seft-adjoint for any $\sigma\in\mathbb{R}$. Moreover, the embeddings
\begin{equation*}
\mathfrak{D}(A^{\sigma_1})\hookrightarrow \mathfrak{D}(A^{\sigma_2})\quad{\rm and}\quad \mathfrak{D}(A_*^{\sigma_1})\hookrightarrow \mathfrak{D}(A_*^{\sigma_2})
\end{equation*}
are continuous for $\sigma_1>\sigma_2$. Here, the norms in $D(A^\sigma)$ and $D(A_*^\sigma)$, for  $\sigma\geq 0$, are given by $\|u\|_{D(A^\sigma)}:=\|A^\sigma u\|$ and $\|v\|_{D(A_*^\sigma)}:=\|A^\sigma_* v \|$ respectively.

With the considerations presented above, the spectrum of  the operators $A$ and $A_*$ are constituted only by positive eigenvalues. The eigenvalues for both operators are given by $\sigma_n^2$ where $\sigma_n=\frac{n\pi}{l},\, n\in\mathbb{N}$, and the corresponding unitary eigenfunctions associated to these eigenvalues are
\begin{equation}\label{Eq08HC}
e_n(x)=\sqrt{\dfrac{2}{l}}\sin(\sigma_nx)\qquad\text{and}\qquad e^*_n(x)=\sqrt{\dfrac{2}{l}}\cos(\sigma_nx).
\end{equation}
The sequences $\{e_n\}_{n\in\mathbb{N}}$ and  $\{e^*_n\}_{n\in\mathbb{N}}$ constitute a Hilbert's base for the spaces $L^2(0,l)=D(A^0)$ and  $L_*^2(0,l)=D(A_*^0)$ respectively, thus for $u\in D(A^0)$ and $v\in D(A_*^0)$, 
\begin{equation*}
u=\sum\limits_{n=1}^\infty \dual{u}{e_n}e_n\qquad\text{and}\qquad v=\sum\limits_{n=1}^\infty \dual{v}{e_n^*}e_n^*.
\end{equation*}
Note that, for $u\in D(A^{\sigma+1/2})$, we have
\begin{equation*}
A^{\sigma+1/2}u=\sum\limits_{n=1}^\infty \sigma_n^{2\sigma+1}\dual{u}{e_n}e_n\qquad \text{and} \qquad
A_*^\sigma\partial_xu=\sum\limits_{n=1}^\infty \sigma_n^{2\sigma+1}\dual{u}{e_n}e_n^*,
\end{equation*}
from where it following, by Parseval's identity, that
\begin{equation}
\label{Eq09HC}
\|A^{\sigma+1/2}u\|=\|A^\sigma_* \partial_xu\|.
\end{equation}
In particular, $\|A^\frac{1}{2}u\|=\|\partial_xu\|$ for $\sigma=0$. Similarly, for $v\in D(A_*^{\sigma+1/2})$, we find 
\begin{equation}\label{E10HC}
\|A_*^{\sigma+1/2}v\|=\|A^\sigma \partial_xv\|,
\end{equation}
and for $u\in D(A^{\sigma_0}), v\in D(A_*^{\sigma_0})$, with $\sigma_0=\max\{\sigma,1/2\}$, we have
\begin{equation}\label{Eq11HC}
\dual{A_*^\sigma v}{\partial_xu}=-\dual{\partial_xv}{A^\sigma u}.
\end{equation}
\begin{theorem}[Hille-Yosida, see \cite{Pazy}, p. 8]\label{THY} Let $\mathcal{H}$ be a Banach space. A linear (unbounded) operator $\mathbb{B}$ is the infinitesimal generator of a $C_0-$semigroup of contractions $S(t)$, $ t\geq 0$, if and only if\\
$(i)$ $\mathbb{B}$ is closed and $\overline{\mathfrak{D}(\mathbb{B})}=\mathcal{H}$,\\
$(ii)$ The resolvent set $\rho(\mathbb{B})$ of $\mathbb{B}$ contains $\mathbb{R}^+$ and for every $\lambda>0$, 
\begin{equation*}
\|(\lambda I-\mathbb{B})^{-1}\|_{\mathcal{L}(\mathcal{H})}\leq\dfrac{1}{\lambda}.
\end{equation*}
\end{theorem}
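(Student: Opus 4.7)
The plan is to establish the equivalence in both directions via the classical Hille--Yosida construction.

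For the necessity direction, assume $\mathbb{B}$ generates a contraction $C_0$-semigroup $S(t)$. I would first establish density of $D(\mathbb{B})$ by noting that the vectors $x_t := t^{-1}\int_0^t S(s)x\,ds$ lie in $D(\mathbb{B})$ for every $x\in\mathcal{H}$ (as one sees by computing the difference quotient $h^{-1}(S(h)x_t - x_t)$ and letting $h\to 0^+$) and $x_t\to x$ as $t\to 0^+$ by strong continuity. Closedness follows by passing to the limit in the integral identity $S(t)x-x=\int_0^t S(s)\mathbb{B} x\,ds$ along sequences in $D(\mathbb{B})$. For the resolvent bound I would define, for $\lambda>0$, the operator $R(\lambda)x := \int_0^\infty e^{-\lambda t} S(t)x\,dt$; the contraction bound $\|S(t)\|\leq 1$ immediately gives $\|R(\lambda)\|\leq 1/\lambda$, and a straightforward manipulation using the semigroup property identifies $R(\lambda)=(\lambda I-\mathbb{B})^{-1}$.

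For the sufficiency direction, the main tool is the family of Yosida approximations
\begin{equation*}
\mathbb{B}_\lambda := \lambda^2(\lambda I-\mathbb{B})^{-1}-\lambda I,\qquad \lambda>0,
\end{equation*}
each of which is bounded, hence generates the uniformly continuous semigroup $S_\lambda(t)=e^{t\mathbb{B}_\lambda}$; condition $(ii)$ then forces $\|S_\lambda(t)\|\leq 1$. The key analytical step is to establish $\lambda(\lambda I-\mathbb{B})^{-1}x\to x$ as $\lambda\to\infty$ for every $x\in\mathcal{H}$: first on $D(\mathbb{B})$ by writing $\lambda(\lambda I-\mathbb{B})^{-1}x - x=(\lambda I-\mathbb{B})^{-1}\mathbb{B} x$ and applying the resolvent bound, and then on $\mathcal{H}$ by density (using the uniform bound $\|\lambda(\lambda I-\mathbb{B})^{-1}\|\leq 1$). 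Consequently $\mathbb{B}_\lambda x\to \mathbb{B} x$ for every $x\in D(\mathbb{B})$.

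It remains to build $S(t)$ as a strong limit of $S_\lambda(t)$. Using commutativity of $S_\lambda$ and $S_\mu$ (a consequence of the resolvent identity) together with their contractive character, I would obtain the estimate $\|S_\lambda(t)x-S_\mu(t)x\|\leq t\,\|\mathbb{B}_\lambda x - \mathbb{B}_\mu x\|$ on $D(\mathbb{B})$, so that $\{S_\lambda(t)x\}$ is Cauchy uniformly on bounded $t$-intervals. Extending by density produces a contraction $C_0$-semigroup $S(t)$, and a short calculation identifies its generator with $\mathbb{B}$ on $D(\mathbb{B})$; maximality via the resolvent then forces equality of the domains. The main obstacle is precisely this convergence step: the norms $\|\mathbb{B}_\lambda\|$ diverge with $\lambda$, so the uniform control of $S_\lambda(t)-S_\mu(t)$ cannot come from naive operator estimates and must be extracted entirely from the contractive property of the approximants combined with the resolvent bound in $(ii)$.
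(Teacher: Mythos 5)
Your proposal is correct, and it is precisely the classical Hille--Yosida argument: necessity via the Laplace-transform resolvent $R(\lambda)x=\int_0^\infty e^{-\lambda t}S(t)x\,dt$ with the contraction bound giving $\|R(\lambda)\|\leq 1/\lambda$, and sufficiency via the Yosida approximants $\mathbb{B}_\lambda=\lambda^2(\lambda I-\mathbb{B})^{-1}-\lambda I$ together with the telescoping estimate $\|S_\lambda(t)x-S_\mu(t)x\|\leq t\,\|\mathbb{B}_\lambda x-\mathbb{B}_\mu x\|$ on $\mathfrak{D}(\mathbb{B})$. The paper gives no proof of this statement---it quotes the theorem directly from the cited reference \cite{Pazy}---and your sketch reproduces the standard proof found there, so the two approaches coincide.
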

\begin{theorem}[Lions' Interpolation, see \cite{EN2000}, Theorem  5.34 ]\label{Lions-Landau-Kolmogorov}  Let $\alpha<\beta<\gamma$. The there exists a constant $L=L(\alpha,\beta,\gamma)$ such that
\begin{equation}\label{ILLK}
\|A^\beta u\|\leq L\|A^\alpha u\|^\frac{\gamma-\beta}{\gamma-\alpha}\cdot \|A^\gamma u\|^\frac{\beta-\alpha}{\gamma-\alpha}
\end{equation}
for every $u\in\mathfrak{D}(A^\gamma)$.
\end{theorem}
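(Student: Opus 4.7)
The plan is to bypass the abstract interpolation machinery of \cite{EN2000} and instead exploit the spectral decomposition of $A$ already assembled in Section~2, reducing the operator inequality to a scalar H\"older estimate on the Fourier coefficients of $u$. Since $A$ is positive self-adjoint with compact inverse, eigenvalues $\sigma_n^{2}$, and Hilbert basis $\{e_n\}$, I would begin by writing $u=\sum_{n}u_n e_n$ with $u_n=\dual{u}{e_n}$, so that Parseval gives
\begin{equation*}
\|A^{s}u\|^{2}=\sum_{n=1}^{\infty}\sigma_n^{4s}|u_n|^{2}
\end{equation*}
for every $s\in\mathbb{R}$. The continuous embedding $\mathfrak{D}(A^{\sigma_1})\hookrightarrow \mathfrak{D}(A^{\sigma_2})$ for $\sigma_1>\sigma_2$ recorded in the excerpt already ensures that $u\in \mathfrak{D}(A^\gamma)$ also lies in $\mathfrak{D}(A^\beta)\cap\mathfrak{D}(A^\alpha)$, so all three norms appearing in \eqref{ILLK} are finite.

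The decisive algebraic step is to introduce the interpolation parameter $\theta=\frac{\gamma-\beta}{\gamma-\alpha}\in(0,1)$, for which $1-\theta=\frac{\beta-\alpha}{\gamma-\alpha}$ and $\beta=\theta\alpha+(1-\theta)\gamma$. This yields the termwise factorization
\begin{equation*}
\sigma_n^{4\beta}|u_n|^{2}=\bigl(\sigma_n^{4\alpha}|u_n|^{2}\bigr)^{\theta}\bigl(\sigma_n^{4\gamma}|u_n|^{2}\bigr)^{1-\theta},
\end{equation*}
to which I would then apply H\"older's inequality on $\sum_n \sigma_n^{4\beta}|u_n|^{2}$ with conjugate exponents $1/\theta$ and $1/(1-\theta)$. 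This produces
\begin{equation*}
\|A^{\beta}u\|^{2}\leq \|A^{\alpha}u\|^{2\theta}\,\|A^{\gamma}u\|^{2(1-\theta)},
\end{equation*}
and a square root recovers \eqref{ILLK}, in fact with the sharp constant $L=1$.

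Because the spectrum of $A$ is discrete and strictly positive, I do not expect any genuine obstacle here: the statement is nothing more than the Lyapunov moment inequality for the spectral measure of $A$. The step requiring the most care is the identification of the correct convex combination $\beta=\theta\alpha+(1-\theta)\gamma$ and the verification of summability of both sequences appearing on the right of the H\"older step, which is automatic from $u\in \mathfrak{D}(A^\gamma)$ together with the embedding $\mathfrak{D}(A^\gamma)\hookrightarrow \mathfrak{D}(A^\alpha)$. The argument goes through verbatim for any positive self-adjoint operator (via the continuous functional calculus in place of the eigenfunction expansion), so the generic constant $L(\alpha,\beta,\gamma)$ stated in \cite{EN2000} may be taken equal to $1$ in the Hilbert-space setting of this paper.
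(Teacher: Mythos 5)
Your proposal is correct, but note that the paper does not prove this statement at all: it is quoted verbatim from Engel--Nagel \cite{EN2000} as a black box, so there is no internal proof to compare against. What you supply is a complete, self-contained derivation tailored to the Hilbert-space setting of Section~2: the convexity identity $\beta=\theta\alpha+(1-\theta)\gamma$ with $\theta=\frac{\gamma-\beta}{\gamma-\alpha}$ is right, the termwise factorization $\sigma_n^{4\beta}|u_n|^2=\bigl(\sigma_n^{4\alpha}|u_n|^2\bigr)^{\theta}\bigl(\sigma_n^{4\gamma}|u_n|^2\bigr)^{1-\theta}$ is exact, and H\"older with exponents $1/\theta$, $1/(1-\theta)$ legitimately yields $\|A^\beta u\|\leq\|A^\alpha u\|^{\theta}\|A^\gamma u\|^{1-\theta}$, i.e.\ the inequality with the sharp constant $L=1$. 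Your summability check is also sound: since the smallest eigenvalue $\sigma_1^2=(\pi/l)^2$ is strictly positive, $\mathfrak{D}(A^\gamma)\hookrightarrow\mathfrak{D}(A^\beta)\hookrightarrow\mathfrak{D}(A^\alpha)$ holds even when some of $\alpha,\beta,\gamma$ are negative, so both series on the right of the H\"older step converge. The trade-off versus the cited result is scope: the Engel--Nagel theorem is formulated for fractional powers of sectorial operators on Banach spaces, where no orthogonal spectral expansion exists, the proof goes through resolvent integral representations of $A^\beta$, and the constant $L(\alpha,\beta,\gamma)$ is genuinely necessary; your spectral-theorem argument (or its continuous-spectral-measure version for general self-adjoint $A\geq c>0$, as you remark) is more elementary and sharper, but only because the paper's $A$ and $A_*$ are positive self-adjoint with compact inverse. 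For the purposes of this paper your proof is fully adequate and even improves the constant.
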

\begin{theorem}[see \cite{LiuZ},  Theorem 1.2.4] \label{TLiuZ}
	Let $\mathbb{B}$ be a linear operator with domain $\mathfrak{D}(\mathbb{B})$ dense in a Hilbert space $\mathbb{H}$. If $\mathbb{B}$ is dissipative and $0\in\rho(\mathbb{B})$, the resolvent set of $\mathbb{B}$, then $\mathbb{B}$ is the generator of a $C_0$-semigroup of contractions on $\mathbb{H}$.
\end{theorem}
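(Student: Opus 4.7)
The plan is to deduce the theorem from the Lumer--Phillips generation theorem, which states that on a Hilbert space $\mathbb{H}$ a densely defined dissipative operator $\mathbb{B}$ generates a $C_0$-semigroup of contractions as soon as the range $R(\lambda I-\mathbb{B})=\mathbb{H}$ for some single $\lambda>0$. Density of $\mathfrak{D}(\mathbb{B})$ and dissipativity are already in the hypotheses, so the entire argument reduces to producing one positive $\lambda$ lying in $\rho(\mathbb{B})$. I would first record the standard consequence of dissipativity on a Hilbert space, namely $\|(\lambda I-\mathbb{B})x\|\geq\lambda\|x\|$ for every $x\in\mathfrak{D}(\mathbb{B})$ and every $\lambda>0$, obtained by expanding $\|(\lambda I-\mathbb{B})x\|^{2}$ and using $\mathrm{Re}\,\dual{\mathbb{B}x}{x}\leq 0$; this pins down injectivity and the bound $1/\lambda$ on any inverse on the range. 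I would then note that the hypothesis $0\in\rho(\mathbb{B})$ makes $\mathbb{B}:\mathfrak{D}(\mathbb{B})\to\mathbb{H}$ bijective with bounded inverse $\mathbb{B}^{-1}\in\mathcal{L}(\mathbb{H})$, and in particular forces $\mathbb{B}$ to be closed.

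The core step is a Neumann-series perturbation around $0$. Factor $\lambda I-\mathbb{B}=-\mathbb{B}(I-\lambda\mathbb{B}^{-1})$ and choose any $\lambda$ with $0<\lambda<1/\|\mathbb{B}^{-1}\|_{\mathcal{L}(\mathbb{H})}$; then $I-\lambda\mathbb{B}^{-1}$ is boundedly invertible via the geometric series $\sum_{k\geq 0}\lambda^{k}\mathbb{B}^{-k}$. Consequently $\lambda I-\mathbb{B}$ possesses the bounded two-sided inverse $-(I-\lambda\mathbb{B}^{-1})^{-1}\mathbb{B}^{-1}$, which ranges in $\mathfrak{D}(\mathbb{B})$ since $\mathbb{B}^{-1}$ does; hence $\lambda\in\rho(\mathbb{B})$ and, a fortiori, $R(\lambda I-\mathbb{B})=\mathbb{H}$. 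Applying Lumer--Phillips at this $\lambda$ concludes that $\mathbb{B}$ generates a $C_0$-semigroup of contractions on $\mathbb{H}$.

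The only delicate bookkeeping is checking that the composition of unbounded with bounded operators lands in the correct domain and is genuinely a two-sided inverse; this is routine but must be handled with care because $\mathbb{B}$ itself is unbounded. The alternative, slightly more self-contained path I would mention as a backup is to verify the Hille--Yosida hypotheses of Theorem \ref{THY} directly: the Neumann-series window above gives a neighbourhood of $0$ inside $\rho(\mathbb{B})$, and for any $\lambda_{0}\in\rho(\mathbb{B})\cap(0,\infty)$ the dissipative bound $\|(\lambda_{0}I-\mathbb{B})^{-1}\|\leq 1/\lambda_{0}$ guarantees that the disk of radius $\lambda_{0}$ centered at $\lambda_{0}$ lies again in $\rho(\mathbb{B})$ by the standard resolvent power-series expansion. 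Iterating this open-ball argument marches rightward through the entire half-line $(0,\infty)$ and produces the uniform estimate $\|(\lambda I-\mathbb{B})^{-1}\|_{\mathcal{L}(\mathbb{H})}\leq 1/\lambda$ required by Hille--Yosida, so the same conclusion follows without explicit appeal to Lumer--Phillips.
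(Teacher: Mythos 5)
The paper offers no proof of this statement at all---it is quoted from Liu and Zheng \cite{LiuZ} (their Theorem 1.2.4)---so there is nothing internal to compare against; your argument is correct, and it is essentially the standard proof found in the cited reference: dissipativity gives $\|(\lambda I-\mathbb{B})x\|\geq\lambda\|x\|$ for $\lambda>0$, the hypothesis $0\in\rho(\mathbb{B})$ plus a Neumann series places every sufficiently small $\lambda>0$ in $\rho(\mathbb{B})$ (in particular $R(\lambda I-\mathbb{B})=\mathbb{H}$), and Lumer--Phillips concludes. The one piece of bookkeeping you flagged evaporates if you observe that $(I-\lambda\mathbb{B}^{-1})^{-1}=\sum_{k\geq0}\lambda^{k}\mathbb{B}^{-k}$ commutes with $\mathbb{B}^{-1}$, so your inverse $-(I-\lambda\mathbb{B}^{-1})^{-1}\mathbb{B}^{-1}$ equals $-\mathbb{B}^{-1}(I-\lambda\mathbb{B}^{-1})^{-1}$, which manifestly maps $\mathbb{H}$ into $\mathfrak{D}(\mathbb{B})$, and the two-sided inverse property is then an algebraic identity among commuting functions of $\mathbb{B}^{-1}$. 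Your backup route---verifying Hille--Yosida directly by marching along $(0,\infty)$ with the resolvent expansion, using that the dissipative bound $\|(\lambda_{0}I-\mathbb{B})^{-1}\|\leq1/\lambda_{0}$ makes each disk of radius $\lambda_{0}$ about $\lambda_{0}$ lie in $\rho(\mathbb{B})$---is likewise sound and yields the uniform estimate $1/\lambda$ on all of $(0,\infty)$, which is exactly how the full half-line is recovered in the textbook treatment.
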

As a consequence of the previous Theorem \ref{TLiuZ},  we have
\begin{theorem}
	Given $U_0\in\mathbb{H}$ there exists a unique weak solution $U$ to  the problem \eqref{Fabstrata} satisfying 
	$$U\in C([0, +\infty), \mathbb{H}).$$
	Furthermore,  if $U_0\in  \mathfrak{D}(\mathbb{B}^k), \; k\in\mathbb{N}$, then the solution $U$ of \eqref{Fabstrata} satisfies
	$$U\in \bigcap_{j=0}^kC^{k-j}([0,+\infty),  \mathfrak{D}(\mathbb{B}^j).$$
	
\end{theorem}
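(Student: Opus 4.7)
The plan is to deduce the stated well-posedness as a direct corollary of Theorem \ref{TLiuZ} applied to the operator $\mathbb{B}$ governing the abstract problem \eqref{Fabstrata}. Assuming (as will have been verified in the well-posedness section preceding this statement) that $\mathbb{B}$ is densely defined, dissipative and that $0\in\rho(\mathbb{B})$, Theorem \ref{TLiuZ} yields a $C_0$-semigroup of contractions $S(t)=e^{t\mathbb{B}}$ on $\mathbb{H}$. I would then define the weak solution by $U(t):=S(t)U_0$ and read off from standard $C_0$-semigroup theory that $U\in C([0,+\infty),\mathbb{H})$ and that $\|U(t)\|_{\mathbb{H}}\leq\|U_0\|_{\mathbb{H}}$. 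Uniqueness follows in the usual way: if $\widetilde U$ is another weak solution with the same data, then $W=U-\widetilde U$ satisfies $W_t=\mathbb{B}W$ with $W(0)=0$; dissipativity of $\mathbb{B}$ gives $\tfrac{d}{dt}\|W\|_{\mathbb{H}}^2\leq 0$, so $W\equiv 0$.

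For the higher-order regularity, the proof proceeds by induction on $k$, using the fundamental property that $\mathbb{B}$ commutes with $S(t)$ on powers of its domain. The base case $k=0$ is already covered above. For $k=1$, if $U_0\in\mathfrak{D}(\mathbb{B})$, then $S(t)U_0\in\mathfrak{D}(\mathbb{B})$ for every $t\geq 0$, the map $t\mapsto \mathbb{B}S(t)U_0=S(t)\mathbb{B}U_0$ is continuous in $\mathbb{H}$, and $\tfrac{d}{dt}S(t)U_0=\mathbb{B}S(t)U_0$; this delivers
\[
U\in C^1\bigl([0,+\infty),\mathbb{H}\bigr)\cap C\bigl([0,+\infty),\mathfrak{D}(\mathbb{B})\bigr),
\]
which is the $k=1$ statement.

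For the inductive step, I would use that if $U_0\in\mathfrak{D}(\mathbb{B}^k)$ then $\mathbb{B}^jU_0\in\mathfrak{D}(\mathbb{B}^{k-j})$ for $0\leq j\leq k$, and that $\mathbb{B}^jS(t)U_0=S(t)\mathbb{B}^jU_0$ on $\mathfrak{D}(\mathbb{B}^k)$. Applying the $k=1$ conclusion iteratively to the initial data $\mathbb{B}^jU_0$ then gives $S(\cdot)\mathbb{B}^jU_0\in C^{k-j}([0,+\infty),\mathbb{H})\cap C^{k-j-1}([0,+\infty),\mathfrak{D}(\mathbb{B}))$, and moving the powers of $\mathbb{B}$ outside the semigroup yields $U=S(\cdot)U_0\in C^{k-j}([0,+\infty),\mathfrak{D}(\mathbb{B}^j))$ for each $j\in\{0,\dots,k\}$. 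Intersecting over $j$ gives the claimed regularity.

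The only genuinely non-trivial obstacle is the verification of the hypotheses of Theorem \ref{TLiuZ} for the concrete operator $\mathbb{B}$ associated to the DWCNT system \eqref{IEq1.1}--\eqref{IEq1.9}, namely density of $\mathfrak{D}(\mathbb{B})$, dissipativity through an energy identity of the form $\mathrm{Re}\dual{\mathbb{B}U}{U}_{\mathbb{H}}\leq 0$, and surjectivity of $\mathbb{B}$ (equivalently $0\in\rho(\mathbb{B})$) via solvability of the stationary problem $\mathbb{B}U=F$ by variational/Lax--Milgram arguments in the appropriate product space; these are carried out in Section 3. Once those hold, the weak and classical solution statements above are automatic consequences of the abstract machinery, and no further computation is needed.
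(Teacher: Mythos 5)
Your proposal is correct and follows essentially the same route as the paper: the statement is presented there as a direct consequence of Theorem \ref{TLiuZ}, whose hypotheses (density of $\mathfrak{D}(\mathbb{B})$, dissipativity via \eqref{disi}, and $0\in\rho(\mathbb{B})$ by Lax--Milgram) are verified in Section 3, exactly as you indicate. Your explicit induction for the higher-order regularity is just the standard $C_0$-semigroup argument that the paper leaves implicit, so there is no substantive difference.
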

\section{Well-posedness}
Let $\alpha,\beta$ be real positive numbers. Consider the following linear system:  
\begin{eqnarray}
	\label{Eq1.1}
	\rho_1 u_{tt}-\kappa_1(u_x-v)_x-m(y-u)-\gamma_1(u_x-v)_{xt}=0\quad{\rm in} \quad (0,l)\times (0,\infty),\\
	\label{Eq1.2}
	\rho_2v_{tt}-b_1v_{xx}-\kappa_1(u_x-v)-\gamma_1(u_x-v)_t+\gamma_2(-\partial_{xx})^\alpha v_t=0\quad{\rm in} \quad (0,l)\times (0,\infty),\\
	\label{Eq1.3}
	\rho_3y_{tt}-\kappa_2(y_x-z)_x+m(y-u)-\gamma_3(y_x-z)_{xt}=0\quad{\rm in} \quad (0,l)\times (0,\infty),\\
	\label{Eq1.4}
	\rho_4z_{tt}-b_2z_{xx}-\kappa_2(y_x-z)-\gamma_3(y_x-z)_t+\gamma_4(-\partial_{xx})^\beta z_t=0\quad{\rm in} \quad (0,l)\times (0,\infty),
\end{eqnarray}
subject to boundary conditions 
\begin{eqnarray}
	\label{Eq1.5}
	\quad u(0,t)=u(l,t)=v_x(0,t)=v_x(l,t)=0,\quad\quad t>0,\\
	\label{Eq1.6}
	y(0,t)=y(l,t)=z_x(0,t)=z_x(l,t)=0,\quad\quad t>0,
\end{eqnarray}
and initial data
\begin{eqnarray}
	\label{Eq1.7}
	u(x,0)=u_0(x),\; u_t(x,0)=u_1(x),\; v(x,0)=v_0(x),\quad \quad x\in(0,l),\\
	\label{Eq1.8}
	v_t(x,0)=v_1(x),\; y(x,0)=y_0(x),\; y_t(x,0)=y_1(x), \quad\quad x\in(0,l),\\
	\label{Eq1.9}
	z(x,0)=z_0(x),\;z_t(x,0)=z_1(x),\quad\quad x\in(0,l),
\end{eqnarray}
where $m, b_1, b_2,\kappa_1, \kappa_2,  \rho_1,\rho_2,\rho_3,\rho_4, \gamma_1,\gamma_2,\gamma_3,\gamma_4$ are real positive numbers and $u_0,u_1,v_0,v_1, y_0,\\ y_1, z_0, z_1$ are given functions. 

Using  the operator $A_*$ defined in \eqref{OAAaste}, the system \eqref{Eq1.1}-\eqref{Eq1.4} can be rewritten as
\begin{eqnarray}
\label{Eq1.13}
\rho_1 u_{tt}-\kappa_1(u_x-v)_x-m(y-u)-\gamma_1(u_x-v)_{xt}=0\quad{\rm in} \quad (0,l)\times (0,\infty),\\
\label{Eq1.14}
\rho_2v_{tt}+b_1A_*v-\kappa_1(u_x-v)-\gamma_1(u_x-v)_t+\gamma_2A_*^\alpha v_t=0\quad{\rm in} \quad (0,l)\times (0,\infty),\\
\label{Eq1.15}
\rho_3y_{tt}-\kappa_2(y_x-z)_x+m(y-u)-\gamma_3(y_x-z)_{xt}=0\quad{\rm in} \quad (0,l)\times (0,\infty),\\
\label{Eq1.16}
\rho_4z_{tt}+b_2A_*z-\kappa_2(y_x-z)-\gamma_3(y_x-z)_t+\gamma_4 A_*^{\beta}z_t=0\quad{\rm in} \quad (0,l)\times (0,\infty).
\end{eqnarray}

For every solution of the system \eqref{Eq1.13}-\eqref{Eq1.16} the total energy $\mathfrak{E}\colon \mathbb{R}^+_0\to\mathbb{R}^+$ is given by    
\begin{multline}\label{Energia01}
\mathfrak{E}(t)=\frac{1}{2}\bigg[ \rho_1 \|u_t\|^2+\rho_2\|v_t\|^2+\rho_3\|y_t\|^2+  \rho_4\|z_t\|^2+ \kappa_1\|u_x-v\|^2+\kappa_2\|y_x-z\|^2+m\|y-u\|^2 \\+b_1\|A_*^{1/2}v\|^2 +b_2 \|A_*^{1/2}z\|^2 \bigg ](t), \quad t\geq 0.
\end{multline}
Then, a straightforward computation gives
\begin{equation}\label{Dissipa01}
\dfrac{d}{dt}\mathfrak{E}(t)=-\gamma_1\|(u_x-v)_t\|^2 -\gamma_2\|A_*^{\alpha/2} v_t\|^2 - \gamma_3\|(y_x-z)_t\|^2 -\gamma_4 \|A_*^{\beta/2}z_t\|^2 \leq 0,
\end{equation}
from where it follows that $\mathfrak{E}$ is non-increasing with $\mathfrak{E}(t)\leq \mathfrak{E}(0)$ for all $t\geq 0$.

We introduce the phase space over the field $\mathbb{C}$ of complex numbers
\begin{eqnarray*}
    \label{EspacoFase}
    \mathcal{H}:=\bigg[D(A^\frac{1}{2})\times D(A^0) \times D(A_*^\frac{1}{2})\times D(A_*^0) \bigg]^2.
\end{eqnarray*}
It is a Hilbert space equipped with the inner product
\begin{eqnarray*}
	\dual{ U_1}{U_2}_{\mathcal{H}} & := & \rho_1\dual{\tilde{u_1}}{\tilde{u_2}}+\rho_2\dual{\tilde{v_1}}{\tilde{v_2}}+\rho_3\dual{\tilde{y_1}}{\tilde{y_2}}+\rho_4\dual{\tilde{z_1}}{\tilde{z_2}}+\kappa_1\dual{u_{1x}-v_1}{u_{2x}-v_2}\\
	&  & + \kappa_2\dual{y_{1x}-z_1}{y_{2x}-z_2} +m\dual{y_1-u_1}{y_2-u_2} \\
	& &+b_1\dual{A_{*}^{1/2}v_{1}}{A_{*}^{1/2}v_{2}}+b_2\dual{A_{*}^{1/2}z_{1}}{A_{*}^{1/2}z_{2}},
\end{eqnarray*}
for $U_i=(u_i,\tilde{u_i},v_i, \tilde{v_i},y_i, \tilde{y_i},z_i,\tilde{z_i})^T\in \mathcal{H}$,  $i=1,2$ and induced norm
\begin{multline}\label{NORM}
	\|U\|_{\mathcal{H}}^2:=\rho_1 \|\tilde{u}\|^2+\rho_2\|\tilde{v}\|^2+\rho_3\|\tilde{y}\|^2+  \rho_4\|\tilde{z}\|^2+ \kappa_1\|u_x-v\|^2+\kappa_2\|y_x-z\|^2+m\|y-u\|^2 \\+b_1\|A_*^{1/2}v\|^2 +b_2 \|A_*^{1/2}z\|^2,
\end{multline}
   where $u_t=\tilde{u}$, $v_t=\tilde{v}$, $y_t=\tilde{y}$ and $z_t=\tilde{z}$.
 The initial-boundary value problem \eqref{Eq1.7}-\eqref{Eq1.16} can be reduced to the following abstract initial value problem:
 \begin{equation}\label{Fabstrata}
    \frac{d}{dt}U(t)=\mathbb{B} U(t),\quad    U(0)=U_0,
\end{equation}
 where $U(t)=(u,\tilde{u},v, \tilde{v},y, \tilde{y},z, \tilde{z})^T$,  $U_0=(u_0,u_1,v_0,v_1, y_0,y_1,z_0,z_1)^T$  and the operator $\mathbb{B}$ 
 is defined by
\begin{equation}\label{operadorB}
 \mathbb{B}U:=\left(\begin{array}{c}
 \tilde{u}\\
 -\frac{\kappa_1}{\rho_1} Au -\frac{\kappa_1}{\rho_1} v_x +\frac{m}{\rho_1} y-\frac{m}{\rho_1} u -\frac{\gamma_1}{\rho_1} A\tilde{u}-\frac{\gamma_1}{\rho_1} \tilde{v}_x\\
 \tilde{v}\\
-\frac{b_1}{\rho_2} A_*v+\frac{\kappa_1}{\rho_2}u_x-\frac{\kappa_1}{\rho_2} v +\frac{\gamma_1}{\rho_2}\tilde{u}_x-\frac{\gamma_1}{\rho_2} \tilde{v}-\frac{\gamma_2}{\rho_2}A_*^\alpha \tilde{v}\\
\tilde{y}\\
 -\frac{\kappa_2}{\rho_3} Ay-\frac{\kappa_2}{\rho_3} z_x-\frac{m}{\rho_3}y+\frac{m}{\rho_3}u-\frac{\gamma_3}{\rho_3} A\tilde{y}-\frac{\gamma_3}{\rho_3} \tilde{z}_x\\
\tilde{z}\\
-\frac{b_2}{\rho_4}A_*z+\frac{\kappa_2}{\rho_4} y_x-\frac{\kappa_2}{\rho_4} z+\frac{\gamma_3}{\rho_4} \tilde{y}_x-\frac{\gamma_3}{\rho_4}\tilde{z}-\frac{\gamma_4}{\rho_4} A_*^{\beta}\tilde{z}
 \end{array}\right)
\end{equation}
with domain given by
\begin{eqnarray}\label{DOMINIOBcorr}
		\nonumber\mathfrak{D}(\mathbb{B}):=\{U \in \mathcal{H}: (\tilde{u},\tilde{v},\tilde{y},\tilde{z}) \in [D(A^\frac{1}{2})\times D(A_*^\frac{1}{2})]^2; \kappa_1u+\gamma_1\tilde{u}, \kappa_2y+\gamma_3\tilde{y} \in D(A),\\
		b_1v+\gamma_2A_*^{\alpha-1}\tilde{v}, b_2z+\gamma_4A_*^{\beta-1}\tilde{z} \in D(A_*)\}.
	\end{eqnarray}
	Clearly $\mathfrak{D}(\mathbb{B})$ is dense in $\mathcal{H}$. Moreover,  $\mathbb{B}$  is dissipative. In fact, for each $U=(u,\tilde{u},v, \tilde{v},y, \tilde{y},z, \tilde{z})^T\in \mathfrak{D}(\mathbb{B})$, we have
	\begin{equation}\label{disi}
		{\rm Re}\dual{\mathbb{B}U}{U}_{\mathcal{H}}=-\gamma_1\|(\tilde{u}_x-\tilde{v})\|^2 -\gamma_2\|A_*^{\alpha/2} \tilde{v}\|^2 - \gamma_3\|(\tilde{y}_x-\tilde{z})\|^2 -\gamma_4 \|A_*^{\beta/2}\tilde{z}\|^2 \leq 0.
	\end{equation}
	Therefore, it is enough to show that  $0\in \rho(\mathbb{B})$. Thus, we must show that $(0I-\mathbb{B})^{-1}$ exists and is bounded in $\mathcal{H}$.
	Given $F=(f^1, f^2, f^3,f^4,f^5,f^6,f^7, f^8)^T\in \mathcal{H}$, we look for a unique $U=(u,\tilde{u},v,\tilde{
		v},y,\tilde{y},z,\tilde{z})^T\in \mathfrak{D}(\mathbb{B})$,  such that
	\begin{equation}\label{equa}
		\mathbb{B}(U)=F \qquad {\rm in}\qquad \mathcal{H}.
	\end{equation}
	Equivalently, 
	\begin{eqnarray}
		\tilde{u} = f^1 ,  \tilde{y}=f^5 &\hbox{ in }& D(A^{1/2})\label{pv 1},\\
		\tilde{v}  = f^3, \tilde{z} = f^7 &\hbox{ in } &D(A_*^{1/2})\label{pv 2},
	\end{eqnarray}
	\begin{eqnarray}
		\kappa_1 (u_x-v)_x +m (y - u)+\gamma_1f_{xx}^1=\gamma_1f^3_x+\rho_1f^2 &\hbox{ in }& D(A^{0}),\label{pvf 1}\\
		-b_1 A_*v+\kappa_1 (u_x-v) -\gamma_2A_*^{\alpha}f^3= -\gamma_1(f_x^1-f^3)+\rho_2f^4 &\hbox{ in }& D(A_*^{0}),\label{pvf 2}\\
		\kappa_2 (y_x-z)_x -m (y - u)+\gamma_3f_{xx}^5=-\gamma_3f^7_x+\rho_3f^6 &\hbox{ in }& D(A^{0}), \label{pvf 3}\\
		-b_2 A_*z+\kappa_2 (y_x-z)-\gamma_4A_*^{\beta}f^7 = -\gamma_3(f_x^5-f^7)+\rho_4f^8 &\hbox{ in }& D(A_*^{0}). \label{pvf 4}
	\end{eqnarray}
	Hence, the variational problem of the system \eqref{pvf 1}-\eqref{pvf 4} is given by
	$$B((u,v,y,z),(u^*,v^*,y^*,z^*))=\mathcal{L}(u^*,v^*,y^*,z^*),$$
	where  $B(\cdot,\cdot)$ is a sesquilinear form in  $[D(A^\frac{1}{2})\times D(A_*^\frac{1}{2})]^2$
	defined by
	\begin{eqnarray*}
		B((u,v,y,z),(u^*,v^*,y^*,z^*)) 
		=  \kappa_1 \dual{u_x-v}{u_x^*-v^*} +m\dual{ y - u}{y^*-u^*}\\+\kappa_2 \dual{y_x-z}{y^*_x-z^*} 
		 +b_1 \dual{A_*^{1/2}v}{A_*^{1/2}v^*} + b_2\dual{ A_*^{1/2}z}{A_*^{1/2}z^*}.
	\end{eqnarray*}
	Considering the norm
	\begin{eqnarray*}\label{Norma-sesquilinear}
		\|(u^*,v^*,y^*,z^*)\|^2_{[D(A^\frac{1}{2})\times D(A_*^\frac{1}{2})]^2} 
		=  \kappa_1 \|u_x^*-v^*\|^2 +m\|y^*-u^*\|^2\\+\kappa_2 \|y^*_x-z^*\|^2 +b_1 \|A_*^{1/2}v^*\|^2 + b_2\|A_*^{1/2}z^*\|^2,
	\end{eqnarray*}
it follows that	$B$ is strongly  coercive in $[D(A^\frac{1}{2})\times D(A_*^\frac{1}{2})]^2$ because 
	\begin{eqnarray*}\label{fortemente-coerciva}B((u,v,y,z),(u,v,y,z))=\|(u,v,y,z)\|^2_{[D(A^\frac{1}{2})\times D(A_*^\frac{1}{2})]^2}.
	\end{eqnarray*}
Due to
	\begin{eqnarray*}
	|\mathcal{L}(u^*,v^*,y^*,z^*)| &\leq& C \|F\|_{\mathcal{H}} \|(u^*,v^*,y^*,z^*)\|_{[D(A^\frac{1}{2})\times D(A_*^\frac{1}{2})]^2}\label{continua}
\end{eqnarray*}
	we prove that the linear form 
	$$\mathcal{L}: [D(A^\frac{1}{2})\times D(A_*^\frac{1}{2})]^2 \rightarrow \mathbb{C}$$
	defined by
	\begin{eqnarray*}
		\mathcal{L}(u^*,v^*,y^*,z^*) &=&-\gamma_1\dual{f_x^1-f^3}{u^*_x -v^*}-\gamma_3\dual{f_x^5-f^7}{y^*_x-z^*}\\
		&&-\gamma_2\dual{A_*^{\alpha/2}f^3}{A_*^{\alpha/2}v^*} -\gamma_4\dual{A_*^{\beta/2}f^7}{A_*^{\beta/2}z^*}\\
		&&-\rho_1\dual{f^2}{u^*} -\rho_2\dual{f^4}{v^*} -\rho_3\dual{f^6}{y^*}-\rho_4\dual{f^8}{z^*}
	\end{eqnarray*}
	is continuous. 	By the Lax-Milgran Theorem there is a unique $(u,v,y,z) \in [D(A^\frac{1}{2})\times D(A_*^\frac{1}{2})]^2$ such that
	\begin{eqnarray}\label{lax-milgran}
		B((u,v,y,z),(u^*,v^*,y^*,z^*))=\mathcal{L}(u^*,v^*,y^*,z^*),
	\end{eqnarray}
	for all $(u^*,v^*,y^*,z^*) \in [D(A^\frac{1}{2})\times D(A_*^\frac{1}{2})]^2.$
	
	Choosing 
	$(u^*,0,0,0), (0,v^*,0,0),$ $(0,0,y^*,0),  (0,0,0,z^*) \in [D(A^\frac{1}{2})\times D(A_*^\frac{1}{2})]^2$, it is easy to see, that
	\begin{eqnarray*}
		\tilde{u},  \tilde{y} \in  D(A^{1/2}), \,\,
		\tilde{v}, \tilde{z} \in D(A_*^{1/2}),\,\,
		\kappa_1 u + \gamma_1\tilde{u} \in D(A),\\
		b_1 v  +\gamma_2A_*^{\alpha-1}\tilde{v} \in D(A_*),\,\,
		\kappa_2 y +\gamma_3\tilde{y} \in D(A),\,\,
		b_2 z+ \gamma_4 A_*^{\beta-1}\tilde{z} \in D(A_*),
	\end{eqnarray*}
	consequently 
	$U=(u,\tilde{u},v,\tilde{
		v},y,\tilde{y},z,\tilde{z}) \in \mathfrak{D}(\mathbb{B})$ and
	 $\mathbb{B}$ is subjective. On the other hand, due to
	 $$\|U\|^2_{\mathcal{H}}=\|\mathbb{B}^{-1}F\|^2_{\mathcal{H}}\leq C\|F\|^2_{\mathcal{H}},$$
	 it follows that $\mathbb{B}^{-1}$ is a bounded operator. Therefore, we conclude that $0\in \rho(\mathbb{B})$.

\section{Exponential Decay,  for $(\alpha,\beta)\in [0,1]^2$}\label{3.1}

\paragraph*{}In this section, we will study the asymptotic behavior of the semigroup of the problem \eqref{Eq1.1}-\eqref{Eq1.9}.  We will use the following spectral characterization of exponential stability of semigroups due to Gearhart\cite{Gearhart}.
\begin{theorem}[See \cite{LiuZ}, Theorem 1.3.2]\label{LiuZExponential}
Let $S(t)=e^{t\mathbb{B}}$ be  a  $C_0$-semigroup of contractions on  a Hilbert space $ \mathcal{H}$. Then $S(t)$ is exponentially stable if and only if  
	\begin{equation}\label{EImaginario}
\rho(\mathbb{B})\supseteq\{ i\lambda/ \lambda\in \R \} 	\equiv i\R
\end{equation}
and
\begin{equation}\label{Exponential}
 \limsup\limits_{|\lambda|\to
   \infty}   \|(i\lambda I-\mathbb{B})^{-1}\|_{\mathcal{L}( \mathcal{H})}<\infty
\end{equation}
holds.
\end{theorem}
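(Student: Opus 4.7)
The plan is to establish the two implications of this spectral characterization separately. The necessity (exponential stability implies the two resolvent conditions) is the easier direction and follows from the Laplace-transform representation of the resolvent. The sufficiency is the nontrivial content of Gearhart's theorem and relies in an essential way on Plancherel's theorem in the Hilbert-space setting --- this is precisely where the argument would fail in a general Banach space, and it is why the result is formulated for Hilbert spaces.

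For the necessity, suppose $\|S(t)\|_{\mathcal{L}(\mathcal{H})}\leq M e^{-\omega t}$ for some $\omega>0$. Then the growth bound of the semigroup is strictly negative, so $\sigma(\mathbb{B})\subset\{\lambda:\mathrm{Re}(\lambda)\leq -\omega\}$ and in particular $i\mathbb{R}\subset\rho(\mathbb{B})$. For $\lambda=i\eta$ with $\eta\in\mathbb{R}$ the Hille--Yosida formula gives
$$
(i\eta I-\mathbb{B})^{-1}x=\int_0^\infty e^{-i\eta t}S(t)x\,dt,
$$
and the integral is absolutely convergent, yielding $\|(i\eta I-\mathbb{B})^{-1}\|\leq M/\omega$ uniformly in $\eta$. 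This delivers both \eqref{EImaginario} and \eqref{Exponential}.

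For the sufficiency, assume $i\mathbb{R}\subset\rho(\mathbb{B})$ and $K:=\sup_{\eta\in\mathbb{R}}\|(i\eta I-\mathbb{B})^{-1}\|_{\mathcal{L}(\mathcal{H})}<\infty$. First I would extend the resolvent into a strip $\{\lambda:|\mathrm{Re}(\lambda)|<\delta\}$ by a Neumann-series perturbation from the imaginary axis, obtaining a quantitative bound of the form $\|(\epsilon+i\eta-\mathbb{B})^{-1}\|\leq 2K$ for $|\epsilon|<(2K)^{-1}$, uniformly in $\eta$. Then, for $x\in\mathcal{H}$ and small $\epsilon>0$, Plancherel's theorem applied to the $\mathcal{H}$-valued function $t\mapsto e^{-\epsilon t}\chi_{[0,\infty)}(t)S(t)x$, whose Fourier transform in $t$ is essentially the resolvent at $\epsilon+i\eta$, gives
$$
\int_0^\infty e^{-2\epsilon t}\|S(t)x\|^2\,dt = \frac{1}{2\pi}\int_{-\infty}^\infty \bigl\|(\epsilon I+i\eta I-\mathbb{B})^{-1}x\bigr\|^2\,d\eta.
$$
Combining this with the uniform strip bound and passing to the limit $\epsilon\downarrow 0$ produces $S(\cdot)x\in L^2(0,\infty;\mathcal{H})$ for every $x\in\mathcal{H}$, and Datko's theorem --- which asserts that $L^2$-integrability of all orbits of a $C_0$-semigroup forces exponential stability --- concludes the argument.

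The main obstacle is the Plancherel step: one has to justify the identity above for a Hilbert-space-valued function and, crucially, control the right-hand side uniformly in $\epsilon$ so that the left-hand side remains bounded as $\epsilon\downarrow 0$. A second delicate point is the perturbation argument that extends the resolvent bound from $i\mathbb{R}$ into a strip of positive width: the Neumann series must converge with a radius depending only on $K$ and not on $\eta$, which is exactly what upgrades a pointwise bound on the imaginary axis into a global decay rate for the contraction semigroup.
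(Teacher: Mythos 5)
The paper itself offers no proof of this statement: it is quoted verbatim from \cite{LiuZ} (Theorem 1.3.2, the Gearhart--Huang--Pr\"uss criterion), so your attempt can only be measured against the standard proof in that literature --- which is exactly the route you outline: Laplace transform for necessity; Neumann-series extension into a strip, vector-valued Plancherel, and Datko's theorem for sufficiency. Your necessity argument is correct ($\|(i\eta I-\mathbb{B})^{-1}\|\leq M/\omega$ via the absolutely convergent integral representation, valid because the growth bound is negative), and the strip bound $\|(\epsilon+i\eta I-\mathbb{B})^{-1}\|\leq K/(1-|\epsilon|K)\leq 2K$ for $|\epsilon|\leq (2K)^{-1}$ is also correct and uniform in $\eta$.

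However, the sufficiency half has a genuine gap at precisely the point you yourself flag as ``the main obstacle'' and then leave unresolved. The phrase ``combining this with the uniform strip bound'' cannot work as stated: the strip bound is a sup bound, and a function of $\eta$ that is merely bounded by the constant $2K\|x\|$ is not in $L^2(\mathbb{R})$, so it gives no control whatsoever of $\int_{\mathbb{R}}\|(\epsilon+i\eta I-\mathbb{B})^{-1}x\|^2\,d\eta$, let alone a bound uniform in $\epsilon$; without that, the limit $\epsilon\downarrow 0$ yields nothing and Datko's theorem cannot be invoked. The missing ingredient is the resolvent identity. Writing $R(\lambda)=(\lambda I-\mathbb{B})^{-1}$ and using that resolvents commute, one has $R(\epsilon+i\eta)x=R(1+i\eta)x+(1-\epsilon)R(\epsilon+i\eta)R(1+i\eta)x$, whence $\|R(\epsilon+i\eta)x\|\leq (1+2K)\|R(1+i\eta)x\|$ for $0<\epsilon\leq (2K)^{-1}$; Plancherel applied at the fixed abscissa $1$, where contractivity gives $\int_{\mathbb{R}}\|R(1+i\eta)x\|^2\,d\eta=2\pi\int_0^\infty e^{-2t}\|S(t)x\|^2\,dt\leq \pi\|x\|^2$, then bounds the right-hand side of your Plancherel identity by $(1+2K)^2\pi\|x\|^2$ uniformly in $\epsilon$, and monotone convergence plus Datko finishes. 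Note that the tempting alternative --- restricting to $x\in\mathfrak{D}(\mathbb{B})$ to gain $O(1/|\eta|)$ decay of the resolvent --- does not suffice, since it only yields $\int_0^\infty\|S(t)x\|^2\,dt\lesssim \|x\|^2+\|\mathbb{B}x\|^2$, a bound in the graph norm rather than in $\|x\|$ alone, and Datko's theorem requires square-integrable orbits for \emph{every} $x\in\mathcal{H}$.
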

\begin{remark}
 Note that to show the condition \eqref{Exponential}, it is enough to proof that: given a real number $\delta>0$, there exists a constant $C_\delta>0$ such that the solutions of the problem \eqref{Eq1.1}-\eqref{Eq1.9}  satisfy the following inequality
 \begin{equation}\label{EqvExponencial}
 \|U\|^2_{\mathcal{H}}\leq C_\delta\|F\|_{\mathcal{H}}\|U\|_{\mathcal{H}}\qquad {\rm for}\quad 0\leq\alpha,\beta\leq 1.
 \end{equation}
 \end{remark}
\paragraph*{} In order to obtain \eqref{EqvExponencial}, we need a priori estimates. Consider
 $$U=(u,\tilde{u},v,\tilde{v},y,\tilde{y},z, \tilde{z})^T\in \mathfrak{D}(\mathbb{B})\,\,{\rm and}\,\, F=(f^1, f^2, f^3, f^4, f^5, f^6, f^7, f^8)^T\in \mathcal{H}$$  such that $(i\lambda I-\mathbb{B})U=F$, with $\lambda\in \R$. The problem \eqref{Eq1.1}-\eqref{Eq1.9} can be rewritten in the form:
\begin{eqnarray}
	\label{Eq001AE}
	i\lambda u-\tilde{u} & = & f^1\quad {\rm in}\quad D(A^\frac{1}{2}),\\
	\label{Eq002AE}
	i\lambda\tilde{u}+\dfrac{\kappa_1}{\rho_1}Au+\dfrac{\kappa_1 }{\rho_1}v_x-\dfrac{m}{\rho_1}(y-u)+\dfrac{\gamma_1}{\rho_1}A\tilde{u}+\dfrac{\gamma_1}{\rho_1}\tilde{v}_x & = & f^2\quad {\rm in}\quad D(A^0),\\
	\label{Eq003AE}
	i\lambda v-\tilde{v} & = & f^3\quad {\rm in}\quad D(A_*^\frac{1}{2}),\\
	\label{Eq004AE}
	i\lambda \tilde{v}+\dfrac{b_1}
	{\rho_2}A_*v-\dfrac{\kappa_1}{\rho_2}u_x+\dfrac{\kappa_1}{\rho_2}v-\dfrac{\gamma_1}{\rho_2}\tilde{u}_x+\dfrac{\gamma_1}{\rho_2}\tilde{v}+\dfrac{\gamma_2}{\rho_2}A^\alpha_*\tilde{v} & = & f^4\quad {\rm in}\quad D(A^0_*),\\
	\label{Eq005AE}
	i\lambda y-\tilde{y} &= & f^5\quad {\rm in}\quad D(A^\frac{1}{2}),\\
	\label{Eq006AE}
	i\lambda\tilde{y}+\dfrac{\kappa_2}{\rho_3}Ay+\dfrac{\kappa_2}{\rho_3}z_x+\dfrac{m}{\rho_3}(y-u)+\dfrac{\gamma_3}{\rho_3}A\tilde{y}+\dfrac{\gamma_3}{\rho_3}\tilde{z}_x & = & f^6\quad {\rm in}\quad D(A^0),\\
	\label{Eq007AE}
	i\lambda z-\tilde{z} &= & f^7\quad {\rm in}\quad D(A_*^\frac{1}{2}),\\
	\label{Eq008AE}
	i\lambda\tilde{z}+\dfrac{b_2}{\rho_4}A_*z-\dfrac{\kappa_2}{\rho_4}y_x+\dfrac{\kappa_2}{\rho_4}z-\dfrac{\gamma_3}{\rho_4}\tilde{y}_x+\dfrac{\gamma_3}{\rho_4}\tilde{z}+\dfrac{\gamma_4}{\rho_4}A^\beta_*\tilde{z} &= & f^8\quad {\rm in}\quad D(A_*^0).
\end{eqnarray}
Due to  \eqref{disi}, we estimate
\begin{equation}\label{dis-10}
\gamma_1 \|\tilde{u}_x-\tilde{v}\|^2+\gamma_2\|A^\frac{\alpha}{2}_*\tilde{v}\|^2+\gamma_3\| \tilde{y_x}-\tilde{z}\|^2+\gamma_4\|A^\frac{\beta}{2}_* \tilde{z}\|^2 \leq  \|F\|_{\mathcal{H}}\|\|U\|_{\mathcal{H}}.
\end{equation}
Since $\alpha\geq0$ e $\beta\geq 0$, it follows that
\begin{equation}\label{Estima01DExp}
	\rho_2\|\tilde{v}\|^2+\rho_4\|\tilde{z}\|^2\leq C\|F\|_{\mathcal{H}}\|U\|_{\mathcal{H}}.
	\end{equation}
Next, we proof the following lemmas:
\begin{lemma}\label{Lemma1} Suppose $\alpha,\beta$ belongs to $[0,1]$. Given a real number $\delta>0$, there exists a constant $C_\delta>0$ such that the solutions of the problem \eqref{Eq1.1}-\eqref{Eq1.9}  satisfy the following inequality
	\begin{equation}
		\label{Estima02Dexp}
		|\lambda|\| y-u\|^2  \leq  C_\delta\|F\|_{\mathcal{H}}\|U\|_{\mathcal{H}},
	\end{equation}
for all $|\lambda|>\delta$.
\end{lemma}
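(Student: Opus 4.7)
The plan is first to extract $y-u$ algebraically from the two first-order spectral identities and then to bound the resulting quantity using the already-established dissipation estimates. Solving \eqref{Eq001AE} and \eqref{Eq005AE} for the displacements and subtracting yields
\begin{equation*}
y-u = \frac{1}{i\lambda}\bigl[(\tilde y-\tilde u)+(f^5-f^1)\bigr],
\end{equation*}
so that, squaring the $L^2$-norm, multiplying by $|\lambda|$, and using the hypothesis $|\lambda|>\delta$,
\begin{equation*}
|\lambda|\,\|y-u\|^2 \;\le\; \frac{2}{|\lambda|}\bigl(\|\tilde y-\tilde u\|^2 + \|f^5-f^1\|^2\bigr) \;\le\; \frac{2}{\delta}\bigl(\|\tilde y-\tilde u\|^2 + \|f^5-f^1\|^2\bigr).
\end{equation*}
The problem thus reduces to controlling these two terms by $C\|F\|_{\mathcal H}\|U\|_{\mathcal H}$.

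For $\|\tilde y-\tilde u\|^2$ I would upgrade the dissipation inequality \eqref{dis-10} into a bound on the velocities themselves. Since $\tilde u\in D(A^{1/2})=H_0^1(0,l)$, the Poincar\'e inequality gives $\|\tilde u\|\le C\|\partial_x\tilde u\|$, and the triangle inequality $\|\partial_x\tilde u\|\le\|\partial_x\tilde u-\tilde v\|+\|\tilde v\|$ combined with \eqref{dis-10} and \eqref{Estima01DExp} furnishes $\|\tilde u\|^2\le C\|F\|_{\mathcal H}\|U\|_{\mathcal H}$. The analogous manipulation in the $(y,z)$-block provides $\|\tilde y\|^2\le C\|F\|_{\mathcal H}\|U\|_{\mathcal H}$, whence $\|\tilde y-\tilde u\|^2\le C\|F\|_{\mathcal H}\|U\|_{\mathcal H}$, as needed.

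The residual term $\|f^5-f^1\|^2$ is dominated by $\|F\|_{\mathcal H}^2$, since $m\|f^5-f^1\|^2$ appears as a summand of the $\mathcal H$-norm through the coupling term $m\|y-u\|^2$ in \eqref{NORM}. This is the only non-routine point in the argument: the natural bound is $C\|F\|_{\mathcal H}^2$, not $C\|F\|_{\mathcal H}\|U\|_{\mathcal H}$. I would handle it by the standard resolvent trick used throughout this section: if $\|F\|_{\mathcal H}>\|U\|_{\mathcal H}$, then the target estimate \eqref{EqvExponencial} holds trivially for this particular $U$ with constant $1$, so one may assume $\|U\|_{\mathcal H}\ge\|F\|_{\mathcal H}$, in which case $\|F\|_{\mathcal H}^2\le\|F\|_{\mathcal H}\|U\|_{\mathcal H}$. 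Assembling the three pieces delivers the announced inequality, uniformly over $(\alpha,\beta)\in[0,1]^2$; no spectral interplay between different blocks of the system is required, which is why this is the easiest of the a priori lemmas.
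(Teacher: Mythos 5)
Your first two steps coincide with the paper's proof: the paper also subtracts \eqref{Eq001AE} from \eqref{Eq005AE} to obtain $i\lambda(y-u)-(\tilde y-\tilde u)=f^5-f^1$, and it also controls the velocities through the dissipation, deriving $\|\tilde u_x\|^2,\|\tilde y_x\|^2\le C\|F\|_{\mathcal H}\|U\|_{\mathcal H}$ from \eqref{dis-10} and \eqref{Estima01DExp} (its estimates \eqref{Estima03Dexp}--\eqref{Estima04Dexp}), which is your Poincar\'e argument. The genuine gap is your treatment of the forcing term. Solving for $y-u$ and squaring leaves you with $\tfrac{2}{|\lambda|}\|f^5-f^1\|^2$, and this is \emph{not} dominated by $C\|F\|_{\mathcal H}\|U\|_{\mathcal H}$: since $\mathbb B$ is unbounded, one can have solution pairs with $\|F\|_{\mathcal H}\gg|\lambda|\,\|U\|_{\mathcal H}$ (take $U$ with high-frequency content at fixed $\lambda$), and then $\tfrac{1}{|\lambda|}\|F\|_{\mathcal H}^2\gg\|F\|_{\mathcal H}\|U\|_{\mathcal H}$, so no constant works. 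Your proposed rescue --- ``WLOG $\|U\|_{\mathcal H}\ge\|F\|_{\mathcal H}$, else \eqref{EqvExponencial} is trivial'' --- does not prove the lemma: the fallback only yields the \emph{unweighted} bound $\|U\|_{\mathcal H}^2\le\|F\|_{\mathcal H}\|U\|_{\mathcal H}$, which is not the claim \eqref{Estima02Dexp}. Worse, the lemma is consumed in Theorem \ref{AnaliticidadeS2} to produce $|\lambda|\|U\|_{\mathcal H}\le C_\delta\|F\|_{\mathcal H}$, and that very bound forces $\|U\|_{\mathcal H}<\|F\|_{\mathcal H}$ once $|\lambda|>C_\delta$; so the regime you discard is exactly the generic one for large $|\lambda|$. (The WLOG trick is legitimate when the final target is the unweighted exponential-stability estimate $\|U\|_{\mathcal H}\le C\|F\|_{\mathcal H}$, but not for a $|\lambda|$-weighted resolvent estimate.)

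The repair is the paper's actual argument: keep the $F$-dependence \emph{linear} by pairing the identity with $y-u$ rather than solving for it, which gives $i\lambda\|y-u\|^2=\dual{\tilde y-\tilde u}{y-u}+\dual{f^5-f^1}{y-u}$. The forcing term is then bounded by $\|f^5-f^1\|\,\|y-u\|\le C\|F\|_{\mathcal H}\|U\|_{\mathcal H}$ --- here your observation that $m\|f^5-f^1\|^2$ is a summand of $\|F\|_{\mathcal H}^2$ is exactly what is needed, together with $m\|y-u\|^2$ being a summand of $\|U\|_{\mathcal H}^2$ --- while Young's inequality on the velocity term yields $C_\varepsilon\bigl(\|\tilde u\|^2+\|\tilde y\|^2\bigr)+\varepsilon\|y-u\|^2$, with $\varepsilon\|y-u\|^2$ absorbed into the left-hand side since $|\lambda|>\delta$. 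Combined with your (correct) bounds $\|\tilde u\|^2+\|\tilde y\|^2\le C\|F\|_{\mathcal H}\|U\|_{\mathcal H}$, this closes the proof uniformly in $(\alpha,\beta)\in[0,1]^2$ with no case distinction.
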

\begin{proof}
	 Taking into account \eqref{Eq001AE} and \eqref{Eq005AE}, we have 
		\begin{equation}\label{key}
		i\lambda(y-u)-(\tilde{y}-\tilde{u})=f^5-f^1.
		\end{equation}
		Taking the duality product between \eqref{key} and $y-u$, we obtain
		\begin{equation}\label{Eq01Lemma3}
			i\lambda\|y-u\|^2=\dual{\tilde{y}}{y-u}-\dual{\tilde{u}}{y-u}+\dual{f^5}{y-u}-\dual{f^1}{y-u}.
		\end{equation}
		By the Cauchy-Schwarz and Young inequalities with $\varepsilon>0$, we estimate
		\begin{equation*} 
			|\lambda| \|y-u\|^2\leq C_\varepsilon\{ \|\tilde{y}\|^2+\|\tilde{u}\|^2\}+\varepsilon\|y-u\|^2+C_\delta\|F\|_{\mathcal{H}}\|U\|_{\mathcal{H}}.
		\end{equation*}
		Due to \eqref{dis-10}, \eqref{Estima01DExp}, we get
	\begin{equation}\label{Estima03Dexp}
			\|\tilde{u}_x\|^2\leq C\|F\|_{\mathcal{H}}\|U\|_{\mathcal{H}}+\|\tilde{v}\|^2\leq C\|F\|_{\mathcal{H}}\|U\|_{\mathcal{H}}
			\end{equation}
		and	
			\begin{equation}\label{Estima04Dexp}
				\|\tilde{y}_x\|^2\leq C\|F\|_{\mathcal{H}}\|U\|_{\mathcal{H}}+\|\tilde{z}\|^2 \leq C\|F\|_{\mathcal{H}}\|U\|_{\mathcal{H}}.
				\end{equation}
			Making use of \eqref{dis-10} and  considering $|\lambda|>\delta>1$, we conclude the proof of Lemma \ref{Lemma1}.
		\end{proof}
Note that, from estimates \eqref{Estima02Dexp}-\eqref{Estima04Dexp}, we have
\begin{equation}\label{Estima05Dexp}
\rho_1\|\tilde{u}\|^2+\rho_3\|\tilde{y}\|^2+m\|y-u\|^2\leq C\|F\|_{\mathcal{H}}\|U\|_{\mathcal{H}},
\end{equation}
for all $|\lambda|\geq 1$.
Therefore, to finish proving the estimate \eqref{EqvExponencial}, it remains to obtain the estimate:
$$\kappa_1\|u_x-v\|^2+ b_1\|A^\frac{1}{2}_* v\|^2+\kappa_2\|y_x-z\|^2+b_2\|A^\frac{1}{2}_*z\|^2\leq C\|F\|_{\mathcal{H}}\|U\|_{\mathcal{H}}.$$

\begin{lemma}\label{Lemma2}
Suppose $\alpha,\beta$ belongs to $[0,1]$. Given a real number $\delta>0$, there exists a constant $C_\delta>0$ such that the solutions of the problem \eqref{Eq1.1}-\eqref{Eq1.9}  satisfy the following inequality
\begin{eqnarray*}
\label{Item02Lemma2}
(i)\quad \kappa_1\|u_x-v\|^2+b_1\|A^\frac{1}{2}_*v\|^2\leq \varepsilon\|U\|^2_{\mathcal{H}}+C_\delta\|F\|_{\mathcal{H}}\|U\|_{\mathcal{H}},\\
\label{Item03Lemma2}
(ii)\quad \kappa_2\|y_x-z\|^2+b_2\|A^\frac{1}{2}_*z\|^2\leq \varepsilon\|U\|^2_{\mathcal{H}}+C_\delta\|F\|_{\mathcal{H}}\|U\|_{\mathcal{H}},
\end{eqnarray*}
for all $|\lambda|>\delta$ and $\varepsilon>0$.
\end{lemma}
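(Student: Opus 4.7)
The plan is to obtain the two bounds via a pair of multiplier identities for each beam. By the symmetry of the system, it suffices to describe the strategy for (i); estimate (ii) will follow by repeating the argument with equations \eqref{Eq006AE} and \eqref{Eq008AE} tested against $y$ and $z$.

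First, I take the duality product of \eqref{Eq002AE} (multiplied by $\rho_1$) with $u$ and integrate by parts, using the Dirichlet conditions on $u$, which produces $\kappa_1\|u_x-v\|^2+\kappa_1\langle u_x-v,v\rangle$ together with cross terms. The term $\rho_1 i\lambda\langle\tilde u,u\rangle$ is rewritten via \eqref{Eq001AE} (recalling $\overline{i\lambda}=-i\lambda$) as $-\rho_1\|\tilde u\|^2-\rho_1\langle\tilde u,f^1\rangle$. Analogously, I take the duality product of \eqref{Eq004AE} (multiplied by $\rho_2$) with $v$; the principal term becomes $b_1\langle A_*v,v\rangle=b_1\|A_*^{1/2}v\|^2$, the $i\lambda$ term is handled via \eqref{Eq003AE}, and the remaining $-\kappa_1\langle u_x-v,v\rangle$ exactly cancels its counterpart from the $u$-identity when the two are summed. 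The result is a master identity of the form
\begin{equation*}
\kappa_1\|u_x-v\|^2+b_1\|A_*^{1/2}v\|^2=\rho_1\|\tilde u\|^2+\rho_2\|\tilde v\|^2+m\,\mathrm{Re}\langle y-u,u\rangle-\gamma_1\,\mathrm{Re}\langle\tilde u_x-\tilde v,u_x-v\rangle-\gamma_2\,\mathrm{Re}\langle A_*^{\alpha/2}\tilde v,A_*^{\alpha/2}v\rangle+R,
\end{equation*}
where $R$ gathers the inner products against $f^1,f^2,f^3,f^4$ and is controlled by $C\|F\|_{\mathcal H}\|U\|_{\mathcal H}$ through Cauchy-Schwarz and the Poincar\'e inequalities on $D(A^{1/2})$ and on the zero-mean space $D(A_*^{1/2})$.

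Next I estimate each term on the right. The kinetic contributions $\rho_1\|\tilde u\|^2+\rho_2\|\tilde v\|^2$ are bounded by $C\|F\|_{\mathcal H}\|U\|_{\mathcal H}$ via \eqref{Estima01DExp} and \eqref{Estima05Dexp}. For the coupling term, Young's inequality gives $m|\langle y-u,u\rangle|\le C_\varepsilon\|y-u\|^2+\varepsilon\|u\|^2$; the first piece is absorbed into $C_\delta\|F\|_{\mathcal H}\|U\|_{\mathcal H}$ by Lemma \ref{Lemma1} (valid for $|\lambda|>\delta$), while $\|u\|^2$ is controlled by $C(\|u_x-v\|^2+\|A_*^{1/2}v\|^2)\le C\|U\|_{\mathcal H}^2$ using Poincar\'e. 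The Kelvin-Voigt bracket is bounded by $C_\varepsilon\|\tilde u_x-\tilde v\|^2+\varepsilon\|u_x-v\|^2$ and \eqref{dis-10} controls the first factor by $\|F\|_{\mathcal H}\|U\|_{\mathcal H}$.

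The delicate piece is the fractional term. Using the self-adjointness of $A_*$ I write $\langle A_*^\alpha\tilde v,v\rangle=\langle A_*^{\alpha/2}\tilde v,A_*^{\alpha/2}v\rangle$, which is legitimate because the domain \eqref{DOMINIOBcorr} gives $\tilde v,v\in D(A_*^{1/2})\subset D(A_*^{\alpha/2})$ for all $\alpha\in[0,1]$. Young's inequality then reduces the control to $\gamma_2\|A_*^{\alpha/2}\tilde v\|^2\le\|F\|_{\mathcal H}\|U\|_{\mathcal H}$ (direct consequence of \eqref{dis-10}) and to $\varepsilon\|A_*^{\alpha/2}v\|^2\le\varepsilon C\|A_*^{1/2}v\|^2$ via the continuous embedding $D(A_*^{1/2})\hookrightarrow D(A_*^{\alpha/2})$, or equivalently Theorem \ref{Lions-Landau-Kolmogorov} with exponents $0\le\alpha/2\le 1/2$. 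Collecting all bounds yields (i); since the statement already permits a term $\varepsilon\|U\|_{\mathcal H}^2$ on the right, no further absorption is needed. Estimate (ii) is obtained in the same way, replacing $(\alpha,u,v,f^1,f^2,f^3,f^4)$ by $(\beta,y,z,f^5,f^6,f^7,f^8)$; the opposite sign of the coupling $m(y-u)$ in \eqref{Eq006AE} versus \eqref{Eq002AE} is harmless because it is estimated in absolute value. The chief technical hurdle is that $\alpha,\beta$ may degenerate to $0$, making the fractional dissipation arbitrarily weak; the essential observation that resolves this is that after the duality pairing only $\|A_*^{\alpha/2}v\|$ (not $\|A_*^{\alpha}v\|$) appears, so the required interpolation exponent never exceeds $1/2$ and the bound is uniform across the whole square $[0,1]^2$.
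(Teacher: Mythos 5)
Your proposal is correct and follows essentially the same route as the paper's proof: you test \eqref{Eq002AE} against $\rho_1 u$ and \eqref{Eq004AE} against $\rho_2 v$ (and symmetrically \eqref{Eq006AE}, \eqref{Eq008AE} against $\rho_3 y$, $\rho_4 z$), sum so that the cross terms assemble $\kappa_1\|u_x-v\|^2+b_1\|A_*^{1/2}v\|^2$, take real parts, and close via \eqref{dis-10}, Lemma \ref{Lemma1} and Poincar\'e, exactly as in the text. The only cosmetic deviation is that you bound the dissipative brackets $\langle\tilde u_x-\tilde v,\,u_x-v\rangle$ and $\langle A_*^{\alpha/2}\tilde v,\,A_*^{\alpha/2}v\rangle$ directly by Cauchy--Schwarz/Young together with the embedding $D(A_*^{1/2})\hookrightarrow D(A_*^{\alpha/2})$, whereas the paper substitutes $\tilde u=i\lambda u-f^1$ and $\tilde v=i\lambda v-f^3$ so that these terms become purely imaginary and vanish under the real part --- both variants are valid and deliver the same estimate.
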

\begin{proof}
	$\! (i)$  Taking the duality product between \eqref{Eq002AE} and $\rho_1u$ and using \eqref{Eq001AE}, we have
	\begin{equation}\label{a}
	\kappa_1\Big[ \|A^\frac{1}{2}u\|^2-\dual{v}{u_x}\Big]=\rho_1\|\tilde{u}\|^2+\rho_1\dual{\tilde{u}}{f^1}+m\dual{(y-u)}{u}-\gamma_1\dual{A\tilde{u}}{u}-\gamma_1\dual{\tilde{v}_x}{u}+\rho_1\dual{f^2}{u}.
	\end{equation}
Similarly, taking the duality product between \eqref{Eq004AE} and $\rho_2v$ and using \eqref{Eq003AE}, we obtain
\begin{eqnarray}
b_1\|A^\frac{1}{2}_*v\|^2+\kappa_1\Big[\|v\|^2-\dual{u_x}{v}\Big]= \rho_2\|\tilde{v}\|^2+\rho_2\dual{\tilde{v}}{f^3}\notag\\ +\gamma_1\dual{\tilde{u}_x}{v}-\gamma_1\dual{\tilde{v}}{v}-\gamma_2\dual{A^\alpha_*\tilde{v}}{v}+\rho_2\dual{f^4}{v}.\label{b}
\end{eqnarray}
Summing \eqref{a} and \eqref{b}, we find
\begin{eqnarray}
\kappa_1\|u_x-v\|^2+b_1\|A^\frac{1}{2}_*v\|^2=\rho_2\|\tilde{v}\|^2+\rho_1\|\tilde{u}\|^2+\rho_1\dual{\tilde{u}}{f^1}+\underbrace{m\dual{(y-u)}{u}}_{I_1}-i\lambda\gamma_1\|A^\frac{1}{2}u\|^2\notag\\
+\gamma_1\dual{A^\frac{1}{2}f^1}{A^\frac{1}{2}u}\underbrace{-\gamma_1\dual{\tilde{v}_x}{u}}_{I_2}
+\rho_1\dual{f^2}{u}+\rho_2\dual{\tilde{v}}{f^3}
+\underbrace{\gamma_1\dual{\tilde{u}_x}{v}}_{I_3}-i\lambda\gamma_1\|v\|^2\notag\\
+\gamma_1\dual{f^3}{v}
-i\lambda\gamma_2\|A^\frac{\alpha}{2}_*v\|^2+\gamma_2\dual{A^\frac{\alpha}{2}_*f^3}{A^\frac{\alpha}{2}_*v}+  \rho_2\dual{f^4}{v}.\label{Eq005Dexp}
\end{eqnarray}
We calculate
	\begin{equation}\label{Eq006Dexp}
		I_1=m\dual{(y-u)}{u}=-m\dual{y-u}{y-u-y}=-m\|y-u\|^2+m\dual{y-u}{y},
	\end{equation}
	\begin{eqnarray}
	I_2=-\gamma_1\dual{\tilde{v}_x}{u}=\gamma_1\dual{\tilde{v}}{u_x-v+v}=\gamma_1\dual{\tilde{v}}{u_x-v}+\gamma_1\dual{\tilde{v}}{v}\notag\\
		=\gamma_1\dual{\tilde{v}}{u_x-v}+\gamma_1\dual{i\lambda v-f^3}{v}=\gamma_1\dual{\tilde{v}}{u_x-v}+i\lambda\gamma_1\|v\|^2-\gamma_1\dual{f^3}{v}\label{Eq007Dexp}
	\end{eqnarray}
and	
	\begin{equation}\label{Eq008Dexp}
		I_3=\gamma_1\dual{\tilde{u}_x}{v}=-\gamma_1\dual{\tilde{u}}{v_x}.
	\end{equation}
Substituting \eqref{Eq006Dexp}-\eqref{Eq008Dexp} into \eqref{Eq005Dexp} and taking the real part, we get
\begin{eqnarray*}
\kappa_1\|u_x-v\|^2+b_1\|A^\frac{1}{2}_*v\|^2=\rho_2\|\tilde{v}\|^2+\rho_1\|\tilde{u}\|^2-m\|y-u\|^2\\+\text{Re}\Big\{ \rho_1\dual{\tilde{u}}{f^1}+m\dual{y-u}{y}
+\gamma_1\dual{A^\frac{1}{2}f^1}{A^\frac{1}{2}u}+ \gamma_1\dual{\tilde{v}}{u_x-v}
+\rho_1\dual{f^2}{u}\\+\rho_2\dual{\tilde{v}}{f^3}
-\gamma_1\dual{\tilde{u}}{v_x}
+\gamma_2\dual{A^\frac{\alpha}{2}_*f^3}{A^\frac{\alpha}{2}_*v}+  \rho_2\dual{f^4}{v}\Big\}.
\end{eqnarray*}
By the Cauchy-Schwarz and Young inequalities with $\varepsilon>0$, we estimate
 \begin{eqnarray*}
 	\kappa_1\|u_x-v\|^2+b_1\|A^\frac{1}{2}_*v\|^2  \leq  C\Big\{ \|\tilde{v}\|^2+\|\tilde{u}\|^2+\|y-u\|^2+\|\tilde{u}\|\|f^1\| +\|A^\frac{1}{2} f^1\|\|A^\frac{1}{2}u\|\\
 + \|f^2\|\|u\|+\|\tilde{v}\|\|f^3\|
 	+\|f^4\|\|v\|+\|A^\frac{\alpha}{2}_*f^3\|\|A^\frac{\alpha}{2}_*v\|
 	\Big\} \\
+ C_\varepsilon\{\|\tilde{u}\|^2+\|\tilde{v}\|^2+\|y-u\|^2\}+  \varepsilon\{ \|v_x\|^2+\|u_x-v\|^2+\|y\|^2 \}.
 \end{eqnarray*}
From Lemma \ref{Lemma1} and $|\lambda|\geq 1$, we have
\begin{equation*}
 \|y\|^2-\|u\|^2\leq \|y-u\|^2\leq |\lambda|\|y-u\|\leq  C_\delta \|F\|_\mathcal{H}\|U\|_\mathcal{H}
 \end{equation*}
Consequently
 \begin{equation}\label{Eq009Dexp}
 	\varepsilon\|y\|^2\leq  C\|F\|_\mathcal{H}\|U\|_\mathcal{H}+\varepsilon\|U\|^2_\mathcal{H}.
 \end{equation}
As $\|A^\frac{1}{2}_*v \|^2=\|v_x\|^2$.   The proof of item $(i)$ is complete. \\
 {\bf Proof:} $(ii)$ Taking the duality product between \eqref{Eq006AE} and $\rho_3y$ and using \eqref{Eq005AE}, we have
\begin{eqnarray}
\kappa_2\dual{(y_x-z)}{y_x}=\rho_3\dual{\tilde{y}}{i\lambda y}-m\dual{(y-u)}{y}-\gamma_3\dual{A\tilde{y}}{y}-\gamma_3\dual{\tilde{z}_x}{y}+\rho_3\dual{f^6}{y}=\notag\\
\rho_3\|\tilde{y}\|^2+\rho_3\dual{\tilde{y}}{f^5}-m\dual{y-u}{y}-i\lambda\gamma_3\|A^\frac{1}{2}y\|^2+\gamma_3\dual{A^\frac{1}{2}f^5}{A^\frac{1}{2}y}
-\gamma_3\dual{\tilde{z}_x}{y}+\rho_3\dual{f^6}{y}.\label{c}
\end{eqnarray}
Similarly, taking the duality product of \eqref{Eq008AE} for $\rho_4z$ and using \eqref{Eq007AE}, we obtain
\begin{eqnarray}
b_2\|A_*^\frac{1}{2}z\|^2-\kappa_2\Big[ \dual{(y_x-z)}{z} \Big]=\rho_4\|\tilde{z}\|^2+\rho_4\dual{\tilde{z}}{f^7}+\gamma_3\dual{\tilde{y}_x}{z}\notag\\-i\dfrac{\gamma_3}{\lambda}\|\tilde{z}\|^2
-i\dfrac{\gamma_3}{\lambda}\dual{\tilde{z}}{f^7}-i\lambda\gamma_4\|A^\frac{\beta}{2}_*z\|^2+\gamma_4\dual{A^\frac{\beta}{2}_*f^7}{A^\frac{\beta}{2}_*z}+\rho_4\dual{f^8}{z}.\label{d}
\end{eqnarray}
Summing \eqref{a} and \eqref{b}, we find
\begin{eqnarray}
\kappa_2\|y_x-z\|^2+b_2\|A^\frac{1}{2}_*z\|^2=\rho_3\|\tilde{y}\|^2+\rho_3\dual{\tilde{y}}{f^5}-m\dual{y-u}{y}-i\lambda\gamma_3\|A^\frac{1}{2}y\|^2\notag\\+\gamma_3\dual{A^\frac{1}{2}f^5}{A^\frac{1}{2}y}
-\gamma_3\dual{\tilde{z}_x}{y}+\rho_3\dual{f^6}{y}+\rho_4\|\tilde{z}\|^2+\rho_4\dual{\tilde{z}}{f^7}+\gamma_3\dual{\tilde{y}_x}{z}-i\dfrac{\gamma_3}{\lambda}\|\tilde{z}\|^2 \notag \\
-i\dfrac{\gamma_3}{\lambda}\dual{\tilde{z}}{f^7}-i\lambda\gamma_4\|A^\frac{\beta}{2}_*z\|^2+\gamma_4\dual{A^\frac{\beta}{2}_*f^7}{A^\frac{\beta}{2}_*z}+\rho_4\dual{f^8}{z}.\label{Eq010Dexp}
\end{eqnarray}
Note that 
\begin{eqnarray}
-\gamma_3\dual{\tilde{z}_x}{y}=\gamma_3\dual{\tilde{z}}{y_x-z+z}=\gamma_3\dual{\tilde{z}}{y_x-z}+\gamma_3\dual{i\lambda z-f^7}{z}\notag \\
=\gamma_3\dual{\tilde{z}}{y_x-z}+i\lambda\gamma_3\|z\|^2-\gamma_3\dual{f^7}{z}.\label{Eq011Dexp}
\end{eqnarray}
Substituting \eqref{Eq011Dexp} into \eqref{Eq010Dexp},  taking the real part and   applying Cauchy-Schwarz inequality, we obtain for $|\lambda|\geq 1$ that
\begin{eqnarray}
		\kappa_2\|y_x-z\|^2+b_2\|A^\frac{1}{2}_*z\|^2\leq \rho_3\|\tilde{y}\|^2+\rho_4\|\tilde{z}\|^2+\underbrace{ m|\dual{y-u}{y}|}_{I_1}+\underbrace{\gamma_3|\dual{\tilde{y}_x}{z}|}_{I_2}
+\underbrace{\gamma_3|\dual{\tilde{z}}{y_x-z}|}_{I_3}\notag\\+C\{\|A^\frac{1}{2}f^5\|\|A^\frac{1}{2}y\| +\|f^6\|\|y\| +\| \tilde{y}\|\|f^5\|+\| \tilde{z}\|\|f^7\|
+\|f^7\|\|z\|+\|f^7\|\|\tilde{z}\|\notag\\
+\|A^\frac{\beta}{2}_*f^7\|\|A^\frac{\beta}{2}_*z\|+\|f^8\| \|z\|\}.\label{Eq03Lemma2A}
\end{eqnarray}
By the Young inequality with $\varepsilon>0$, we estimate
 \begin{equation}\label{Eq012Dexp}
 I_1=m|\dual{y-u}{y}|\leq C_\varepsilon\|y-u\|^2+\varepsilon\|y\|^2 \leq  C\|U\|_\mathcal{H} \|F\|_\mathcal{H}+\varepsilon\|U\|^2_\mathcal{H},
 \end{equation}
 \begin{equation}\label{Eq013Dexp}
 I_3=\gamma_3|\dual{\tilde{z}}{y_x-z}|\leq C_\varepsilon\|\tilde{z}\|^2+\varepsilon\|y_x-z\|^2\leq  C\|U\|_\mathcal{H} \|F\|_\mathcal{H}+\varepsilon\|U\|^2_\mathcal{H}.
 \end{equation}
By the Young inequality with $\varepsilon>0$ and taking into account \eqref{Estima04Dexp}, we estimate
\begin{equation}\label{Eq014Dexp}
I_2= \gamma_3|\dual{\tilde{y}_x}{z}|\leq C_\varepsilon\|\tilde{y}_x\|^2+\varepsilon \|z\|^2 \leq C\|F\|_{\mathcal{H}}\|U\|_{\mathcal{H}}+\varepsilon\|U\|^2_{\mathcal{H}}.
 \end{equation}
Substituting \eqref{Eq012Dexp}, \eqref{Eq013Dexp}, \eqref{Eq014Dexp} into \eqref{Eq03Lemma2A} and combining \eqref{dis-10}, \eqref{Estima04Dexp} and \eqref{Eq011Dexp}, we complete the proof of item $(ii)$.
\end{proof}
\begin{lemma}\label{EImaginary}
	Let $\rho(\mathbb{B})$ be the resolvent set of operator
	$\mathbb{B}$. Then
	\begin{equation}
		i\hspace{0.5pt}\mathbb{R}\subset\rho(\mathbb{B}).
	\end{equation}
\end{lemma}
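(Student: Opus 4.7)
The plan is to argue by contradiction via the classical route: first show that $\mathbb{B}$ has compact resolvent, so $\sigma(\mathbb{B})$ consists only of eigenvalues, and then rule out purely imaginary eigenvalues by exploiting the dissipation identity \eqref{disi}.

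First I would observe that, from the well-posedness section, $0\in\rho(\mathbb{B})$ and $\mathbb{B}^{-1}$ maps $\mathcal{H}$ into the domain $\mathfrak{D}(\mathbb{B})$, whose components live in spaces like $\mathfrak{D}(A^{1/2})$ and $\mathfrak{D}(A_{*}^{1/2})$. Since $A^{-1}$ and $A_{*}^{-1}$ are compact on $L^2(0,l)$ and $L_{*}^2(0,l)$ respectively (by \eqref{Eq08HC} these operators have compact inverses), the embeddings into $\mathcal{H}$ are compact, hence $\mathbb{B}^{-1}$ is a compact operator on $\mathcal{H}$. Therefore $\sigma(\mathbb{B})$ is made of isolated eigenvalues of finite multiplicity, and it suffices to prove $\mathbb{B}$ has no eigenvalue on $i\R$.

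Next I would assume for contradiction that $i\lambda$ is an eigenvalue, with $\lambda\in\R$; since $0\in\rho(\mathbb{B})$ we have $\lambda\neq 0$, and there exists a nonzero $U=(u,\tilde{u},v,\tilde{v},y,\tilde{y},z,\tilde{z})^T\in\mathfrak{D}(\mathbb{B})$ with $\mathbb{B}U=i\lambda U$. Taking the real part of $\dual{\mathbb{B}U}{U}_{\mathcal{H}}=i\lambda\|U\|_{\mathcal{H}}^2$ and inserting the dissipation identity \eqref{disi} yields
\begin{equation*}
0=-\gamma_1\|\tilde{u}_x-\tilde{v}\|^2-\gamma_2\|A_{*}^{\alpha/2}\tilde{v}\|^2-\gamma_3\|\tilde{y}_x-\tilde{z}\|^2-\gamma_4\|A_{*}^{\beta/2}\tilde{z}\|^2,
\end{equation*}
so each summand vanishes. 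Because $A_{*}$ is positive definite on $L_{*}^2(0,l)$ (smallest eigenvalue $(\pi/l)^2>0$), the operator $A_{*}^{\alpha/2}$ is injective for every $\alpha\in[0,1]$, which forces $\tilde{v}=0$; analogously $\tilde{z}=0$. Then $\tilde{u}_x=\tilde{v}=0$ together with $\tilde{u}\in H_0^1(0,l)$ implies $\tilde{u}=0$, and similarly $\tilde{y}=0$. Substituting these identities into the eigenvalue system (the homogeneous form of \eqref{Eq001AE}, \eqref{Eq003AE}, \eqref{Eq005AE}, \eqref{Eq007AE}) and using $\lambda\neq 0$ gives $u=v=y=z=0$, hence $U=0$, a contradiction.

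The only subtlety is the injectivity of $A_{*}^{\alpha/2}$ when $\alpha=0$, which is trivial since $A_{*}^0=I$, and when $\alpha>0$, which is immediate from the spectral representation \eqref{Eq08HC}. With that observation, the whole argument reduces to the dissipation identity plus compactness of the resolvent, and no fine asymptotic analysis in $\lambda$ is required here; the quantitative bounds of Lemmas \ref{Lemma1} and \ref{Lemma2} are reserved for Theorem \ref{LiuZExponential} in the next step.
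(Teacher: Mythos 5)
There is a genuine gap, and it sits in your very first step: the claim that $\mathbb{B}$ has compact resolvent. For this system the compactness argument fails because of the Kelvin--Voigt structure of the domain \eqref{DOMINIOBcorr}: it is only the \emph{combinations} $\kappa_1u+\gamma_1\tilde{u}$, $\kappa_2y+\gamma_3\tilde{y}\in D(A)$ and $b_1v+\gamma_2A_*^{\alpha-1}\tilde{v}$, $b_2z+\gamma_4A_*^{\beta-1}\tilde{z}\in D(A_*)$ that gain regularity, not the position components themselves. Concretely, if $\mathbb{B}U=F$ with $\|F\|_{\mathcal H}\leq 1$, then from \eqref{pvf 1} one only gets that $\kappa_1(u_x-v)+\gamma_1(f^1_x-f^3)$ is bounded in $H^1$, while $f^1_x-f^3$ is merely bounded in $L^2$; hence $u_x-v$ is bounded but not precompact in $L^2$, and similarly $A_*^{1/2}v$ gains no extra regularity from \eqref{pvf 2}. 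So $\mathbb{B}^{-1}$ is \emph{not} compact in the energy norm \eqref{NORM} (this is the typical situation for Kelvin--Voigt damping, where the generator has essential spectrum, e.g.\ accumulation points of eigenvalues on the negative real axis). Once compactness is lost, the reduction ``$\sigma(\mathbb{B})$ consists only of eigenvalues'' is unjustified, and your second step --- the dissipation identity \eqref{disi} forcing $\tilde{v}=\tilde{z}=0$, then $\tilde{u}=\tilde{y}=0$, then $U=0$ --- only excludes purely imaginary \emph{eigenvalues}. It cannot exclude points of $i\R$ lying in the approximate point spectrum, which is exactly the scenario that must be ruled out here.

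The paper's proof handles this correctly, and notably does \emph{not} actually use the eigenvalue reduction it mentions in passing: it argues by contradiction on the supremum $\lambda_0$ of the interval $(-i\lambda_0,i\lambda_0)\subset\rho(\mathbb{B})$, observes that a boundary point of the spectrum is an approximate eigenvalue, extracts a sequence $\lambda_n\to\lambda_0$ with $\delta\leq\lambda_n<\lambda_0$ and unit vectors $U_n\in\mathfrak{D}(\mathbb{B})$ such that $\|(i\lambda_nI-\mathbb{B})U_n\|_{\mathcal H}\to 0$, and then applies the uniform a priori estimate \eqref{EqvExponencial} (built from Lemmas \ref{Lemma1} and \ref{Lemma2}, valid for all real $|\lambda|>\delta$) to conclude $\|U_n\|_{\mathcal H}\to 0$, contradicting $\|U_n\|_{\mathcal H}=1$. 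So the quantitative resolvent bounds you explicitly ``reserved for the next step'' are in fact the essential ingredient of this lemma: without compactness, the only way to keep $i\R$ in the resolvent set is to control approximate eigenvectors, which is precisely what those estimates do. To repair your proof, replace the compact-resolvent reduction by this approximate-spectrum argument (or prove a uniform bound $\|U\|_{\mathcal H}\leq C_\delta\|(i\lambda I-\mathbb{B})U\|_{\mathcal H}$ on compact $\lambda$-ranges away from $0$); your eigenvalue computation, while correct, is not sufficient on its own.
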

\begin{proof}
	Since $\mathbb{B}$  is the infinitesimal generator of a $C_0-$semigroup of contractions $S(t)$, $ t\geq 0$,  from Theorem \ref{THY},  $\mathbb{B}$ is a closed operator and $\mathfrak{D}(\mathbb{B})$ has compact embedding into the energy space $\mathcal{H}$, the spectrum $\sigma(\mathbb{B})$ contains only eigenvalues.
	Let us prove that $i\R\subset\rho(\mathbb{B})$ by using an argument by contradiction, so we suppose that $i\R\not\subset \rho(\mathbb{B})$. 
	As $0\in\rho(\mathbb{B})$ and $\rho(\mathbb{B})$ is open, we consider the highest positive number $\lambda_0$ such that the $(-i\lambda_0,i\lambda_0)\subset\rho(\mathbb{B})$ then $i\lambda_0$ or $-i\lambda_0$ is an element of the spectrum $\sigma(\mathbb{B})$.   
	Suppose $i\lambda_0\in \sigma(\mathbb{B})$ (if $-i\lambda_0\in \sigma(\mathbb{B})$ the proceeding is similar), then for $0<\delta<\lambda_0$ there exist a sequence of real numbers $(\lambda_n)$, with $\delta\leq\lambda_n<\lambda_0$, $\lambda_n\con \lambda_0$, and a vector sequence  $U_n=(u_n,\tilde{u}_n,v_n,\tilde{v}_n, y_n,\tilde{y}_n, z_n, \tilde{z}_n)\in \mathfrak{D}(\mathbb{B})$ with  unitary norms, such that
	\begin{equation*}
		\|(i\lambda_nI-\mathbb{B}) U_n\|_{\mathcal{H}}=\|F_n\|_{\mathcal{H}}\con 0,
	\end{equation*}
	as $n\con \infty$.   Due to \eqref{EqvExponencial}, we have
	\begin{eqnarray*}
		\|U_n\|^2_{\mathcal{H}}=\bigg[ \rho_1 |\tilde{u}_n\|^2+\rho_2\|\tilde{v}_n\|^2+\rho_3\|\tilde{y}_n\|^2+  \rho_4\|\tilde{z}_n\|^2+ \kappa_1\|u_{nx}-v_n\|^2+\kappa_2\|y_{nx}-z_n\|^2\\+m\|y_n-u_n\|^2 
		+b_1\|A_*^{1/2}v_n\|^2 +b_2 \|A_*^{1/2}z_n\|^2 \bigg ]
		\leq C_\delta\|F_n\|_{\mathcal{H}}\|U_n\|_{\mathcal{H}}=C_\delta\|F_n\|_{\mathcal{H}}\con 0.
	\end{eqnarray*}
	Therefore, we obtain  $\|U_n\|_{\mathcal{H}}\con 0$ but this is absurd, since $\|U_n\|_{\mathcal{H}}=1$ for all $n\in\N$. Thus, $i\R\subset \rho(\mathbb{B})$. This completes the proof of this lemma. 
\end{proof}
\begin{theorem}\label{TDExponential}
The semigroup $S(t) = e^{t\mathbb{B}}$ is exponentially stable as long as the parameters $\alpha$ and $\beta$ belongs to $[0,1]$.
\end{theorem}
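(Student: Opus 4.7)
The strategy is to invoke the Gearhart--Pr\"uss-type characterization stated in Theorem \ref{LiuZExponential}. Of the two conditions required there, the first, $i\mathbb{R}\subset\rho(\mathbb{B})$, is already delivered by Lemma \ref{EImaginary}. So the only task that remains is to verify the uniform resolvent bound \eqref{Exponential}, which by the Remark following Theorem \ref{LiuZExponential} is equivalent to proving the a priori estimate \eqref{EqvExponencial}: for each $\delta>0$ there exists $C_\delta>0$ such that
\begin{equation*}
\|U\|^2_{\mathcal{H}}\leq C_\delta\|F\|_{\mathcal{H}}\|U\|_{\mathcal{H}},\qquad |\lambda|>\delta,
\end{equation*}
whenever $(i\lambda I-\mathbb{B})U=F$.

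Given the explicit form of $\|U\|^2_{\mathcal{H}}$ in \eqref{NORM}, the plan is to simply assemble the nine quadratic terms on the right-hand side of \eqref{NORM} from the estimates already produced. Concretely, I would add: the dissipation identity \eqref{dis-10}; its immediate consequence \eqref{Estima01DExp} controlling $\rho_2\|\tilde v\|^2+\rho_4\|\tilde z\|^2$; the grouped bound \eqref{Estima05Dexp} for $\rho_1\|\tilde u\|^2+\rho_3\|\tilde y\|^2+m\|y-u\|^2$; and finally items $(i)$ and $(ii)$ of Lemma \ref{Lemma2}, which handle the two elastic blocks $\kappa_1\|u_x-v\|^2+b_1\|A_*^{1/2}v\|^2$ and $\kappa_2\|y_x-z\|^2+b_2\|A_*^{1/2}z\|^2$. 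Summing these inequalities yields a bound of the shape
\begin{equation*}
\|U\|^2_{\mathcal{H}}\leq 2\varepsilon\|U\|^2_{\mathcal{H}}+C_\delta\|F\|_{\mathcal{H}}\|U\|_{\mathcal{H}},
\end{equation*}
and choosing $\varepsilon<1/4$ lets the term $2\varepsilon\|U\|^2_{\mathcal{H}}$ be absorbed on the left, producing exactly \eqref{EqvExponencial}. Since this resolvent bound holds uniformly in $|\lambda|>\delta$ and $i\mathbb{R}\subset\rho(\mathbb{B})$ by Lemma \ref{EImaginary}, Theorem \ref{LiuZExponential} applies and gives the exponential stability of $S(t)$.

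The main obstacle is therefore not in the present theorem, whose proof is at this point pure bookkeeping, but rather in Lemmas \ref{Lemma1} and \ref{Lemma2} already proved above. In particular, the uniformity of the constants over the entire parameter square $[0,1]^2$ hinges on the fact that the Kelvin--Voigt terms $\gamma_1(u_x-v)_{xt}$ and $\gamma_3(y_x-z)_{xt}$ contribute \emph{strong} dissipation bounds $\|\tilde u_x-\tilde v\|^2$ and $\|\tilde y_x-\tilde z\|^2$ independent of $\alpha,\beta$, which is what rescues the argument when $\alpha$ or $\beta$ is small and the fractional terms $\|A_*^{\alpha/2}\tilde v\|^2,\|A_*^{\beta/2}\tilde z\|^2$ degenerate toward a mere $L^2$-bound. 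It is precisely this structural feature that allows the full square $[0,1]^2$ to be covered in a single argument, with no case split on the values of $\alpha$ and $\beta$.
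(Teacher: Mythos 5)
Your proposal follows essentially the same route as the paper: the paper's proof of Theorem \ref{TDExponential} likewise combines Lemmas \ref{Lemma1} and \ref{Lemma2} with the dissipation estimate \eqref{dis-10} to obtain \eqref{EqvExponencial}, and then invokes Lemma \ref{EImaginary} together with Theorem \ref{LiuZExponential}. Your version is in fact slightly more careful, since you make explicit the absorption of the $\varepsilon\|U\|^2_{\mathcal{H}}$ terms coming from Lemma \ref{Lemma2}, a step the paper leaves implicit.
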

\begin{proof} 
According to  Lemmas \ref{Lemma1} and  \ref{Lemma2}  and applying in the sequence the estimates of \eqref{dis-10}, we get
\begin{equation*}\label{Eq012Exponential}
\|U\|_\mathcal{H}^2\leq C_\delta \|F\|_{\mathcal{H}}\|U\|_{\mathcal{H}}\quad{\rm for}\quad 0\leq\alpha,\beta\leq 1.
\end{equation*}
This implies the condition \eqref{Exponential}. Moreover, by Lemma \ref{EImaginary} the condition \eqref{EImaginario} is satisfied. 
\end{proof}
\section{Analyticity,  for $(\alpha,\beta)\in [0,1]^2$ }
In this section, we will show that the semigroup $S(t)=e^{t\mathbb{B}}$ is analytical for parameters $(\alpha,\beta)\in [0,1]^2$. The proof will be performed using the characterization given in the following Theorem: 

\begin{theorem}[See \cite{LiuZ}]\label{LiuZAnalyticity}
    Let $S(t)=e^{t\mathbb{B}}$ be a $C_0$-semigroup of contractions  on a Hilbert space $\mathcal{H}$.   Suppose that
    \begin{equation}\label{RBEixoIm}
    \rho(\mathbb{B})\supseteq\{ i\lambda/ \lambda\in \R \}  \equiv i\R.
    \end{equation}
     Then $S(t)$ is analytic if and only if
    \begin{equation}\label{Analyticity}
     \limsup\limits_{|\lambda|\to
        \infty}
    \|\lambda(i\lambda I-\mathbb{B})^{-1}\|_{\mathcal{L}(\mathcal{H})}<\infty 
    \end{equation}
    holds.
\end{theorem}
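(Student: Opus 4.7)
The plan is to establish both implications by means of the standard sector characterization of analytic semigroups: a $C_0$-semigroup $S(t)$ is analytic if and only if its generator $\mathbb{B}$ satisfies $\rho(\mathbb{B})\supseteq\Sigma_{\pi/2+\delta}:=\{\mu\in\mathbb{C}\setminus\{0\}:|\arg\mu|<\pi/2+\delta\}$ for some $\delta>0$, together with the resolvent estimate $\|(\mu I-\mathbb{B})^{-1}\|\leq M/|\mu|$ on $\Sigma_{\pi/2+\delta}$. The right-hand side of this characterization is what I will reduce to in both directions.

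\paragraph*{Necessity.} Assume $S(t)$ is analytic. The sector characterization just recalled immediately gives $\|(\mu I-\mathbb{B})^{-1}\|\leq M/|\mu|$ on a sector containing the imaginary axis minus the origin. Restricting to $\mu=i\lambda$ with $\lambda\in\mathbb{R}\setminus\{0\}$, one obtains $\|\lambda(i\lambda I-\mathbb{B})^{-1}\|\leq M$ for every $|\lambda|\geq 1$, which is exactly \eqref{Analyticity}. (The behaviour near $\lambda=0$ is irrelevant for a $\limsup_{|\lambda|\to\infty}$, but it is in any case controlled by the hypothesis $i\mathbb{R}\subset\rho(\mathbb{B})$ and the continuity of the resolvent on the open set $\rho(\mathbb{B})$.)

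\paragraph*{Sufficiency.} This is the substantive direction. Assume \eqref{RBEixoIm} and let $M$ be a constant with $\|\lambda(i\lambda I-\mathbb{B})^{-1}\|\leq M$ for all $|\lambda|$ large. The strategy is a Neumann-series continuation argument away from the imaginary axis. Fix $\lambda\in\mathbb{R}$ with $|\lambda|$ large, and for $\eta\in\mathbb{R}$ write
\begin{equation*}
(i\lambda+\eta)I-\mathbb{B}=(i\lambda I-\mathbb{B})\bigl[I+\eta(i\lambda I-\mathbb{B})^{-1}\bigr].
\end{equation*}
Whenever $|\eta|\,\|(i\lambda I-\mathbb{B})^{-1}\|<1$, i.e.\ $|\eta|<|\lambda|/M$, the bracketed factor is boundedly invertible by geometric series, and one gets
\begin{equation*}
\bigl\|((i\lambda+\eta)I-\mathbb{B})^{-1}\bigr\|\leq\frac{M/|\lambda|}{1-|\eta|M/|\lambda|}.
\end{equation*}
Restricting to $|\eta|\leq|\lambda|/(2M)$ yields a uniform bound $2M/|\lambda|$. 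Since on this range $|i\lambda+\eta|\leq|\lambda|(1+1/(2M))$, we obtain $\|(\mu I-\mathbb{B})^{-1}\|\leq C/|\mu|$ for every $\mu$ in the double cone $\{\mu:|{\rm Re}\,\mu|\leq|{\rm Im}\,\mu|/(2M)\}$ with $|{\rm Im}\,\mu|$ large. On the right half-plane ${\rm Re}\,\mu>0$, contractivity gives $\|(\mu I-\mathbb{B})^{-1}\|\leq 1/{\rm Re}\,\mu$, which fits with the cone estimate near the imaginary axis. On the complementary compact region $\{|\mu|\leq R\}\cap\rho(\mathbb{B})$, containing a neighbourhood of $0$ because $0\in\rho(\mathbb{B})$ (implied by \eqref{RBEixoIm}), the resolvent is norm-continuous and hence bounded. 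Patching these three regions produces an open sector $\Sigma_{\pi/2+\delta}\subset\rho(\mathbb{B})$ with $\|(\mu I-\mathbb{B})^{-1}\|\leq M'/|\mu|$, so the sector characterization of analyticity applies and $S(t)$ is analytic.

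\paragraph*{Main obstacle.} The delicate point is the Neumann-series step and its geometric interpretation: one must choose the admissible perturbation range $|\eta|\leq|\lambda|/(2M)$ small enough that the series converges \emph{and} large enough that the admissible set $\{i\lambda+\eta\}$ covers an honest sector opening around the imaginary axis with a uniform angle $\delta=\arctan(1/(2M))>0$ independent of $|\lambda|$. Everything else is bookkeeping: patching with the half-plane bound for ${\rm Re}\,\mu>0$, handling the bounded region around $\mu=0$ via the hypothesis $i\mathbb{R}\subset\rho(\mathbb{B})$, and appealing to the standard sector characterization to conclude analyticity.
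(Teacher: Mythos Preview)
The paper does not prove this theorem at all: it is quoted verbatim from Liu--Zheng \cite{LiuZ} as a black-box tool, with no argument supplied. So there is no ``paper's own proof'' against which to compare your attempt; the authors simply cite the result and then apply it to their specific operator $\mathbb{B}$ via the estimates of Lemmas~\ref{Lemma1}, \ref{Lemma9}, \ref{Lemma10}.

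That said, your sketch is the standard route to this characterization and is essentially correct. The necessity direction is immediate from the sector theorem. For sufficiency, your Neumann-series continuation off the imaginary axis is the right idea, and you have identified the genuine content: the perturbation window $|\eta|\le|\lambda|/(2M)$ is simultaneously small enough for convergence and proportional to $|\lambda|$, so it sweeps out a cone of fixed aperture $\arctan(1/(2M))$. One small point to make explicit in a full write-up: the hypothesis \eqref{Analyticity} only controls $|\lambda|$ large, so for $|\lambda|\le R$ you must use compactness of $i[-R,R]\subset\rho(\mathbb{B})$ and openness of the resolvent set to obtain a uniform strip $\{|{\rm Re}\,\mu|<\epsilon,\ |{\rm Im}\,\mu|\le R\}\subset\rho(\mathbb{B})$ with bounded resolvent; together with the large-$|\lambda|$ cone and the Hille--Yosida half-plane bound this patches into a genuine sector $\Sigma_{\pi/2+\delta}$. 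You allude to this under ``bounded region around $\mu=0$,'' but the strip (not just a disc) is what is needed to match the cone at height $|{\rm Im}\,\mu|=R$.
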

We will first prove some lemmas:
\begin{lemma}\label{Lemma9}
Suppose $\alpha,\beta$ belongs to $[0,1]$. Given a real number $\delta>0$, there exists a constant $C_\delta>0$ such that the solutions of the problem \eqref{Eq1.1}-\eqref{Eq1.9}  satisfy the following inequalities
\begin{eqnarray*}\label{Item01Lemma9}
(i)\quad\quad\; |\lambda|\|A_*^\frac{1}{2}v\|^2 + |\lambda|\|\tilde{v}\|^2& \leq &  C_\delta\|F\|_\mathcal{H}\|U\|_\mathcal{H},\\
\label{Item02Lemma9}
(ii)\quad\quad\; |\lambda|\|A_*^\frac{1}{2}z\|^2 + |\lambda|\|\tilde{z}\|^2& \leq &  C_\delta\|F\|_\mathcal{H}\|U\|_\mathcal{H},\\
\label{Item03Lemma9}
(iii)\quad \quad\quad\quad\quad\quad\,\,\,\,\quad |\lambda|\|\tilde{u}\|^2 &\leq & C_\delta \|F\|_\mathcal{H}\|U\|_\mathcal{H},\\
\label{Item04Lemma9}
(iv)\quad\quad\quad\quad\quad\quad\quad\,\,\ |\lambda|\|\tilde{y}\|^2 &\leq & C_\delta\|F\|_\mathcal{H}\|U\|_\mathcal{H},
\end{eqnarray*}
for all $|\lambda| > \delta$.
\end{lemma}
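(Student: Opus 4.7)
The plan is to strengthen the estimate $\|U\|^2\le C\|F\|\|U\|$ of Lemma \ref{Lemma2} by an additional factor of $|\lambda|$ on the four quantities above. The main device is that the first-order identities $i\lambda u=\tilde u+f^1$, $i\lambda v=\tilde v+f^3$, $i\lambda y=\tilde y+f^5$, $i\lambda z=\tilde z+f^7$ coming from \eqref{Eq001AE}, \eqref{Eq003AE}, \eqref{Eq005AE}, \eqref{Eq007AE} convert stiffness dualities such as $\langle A_*v,\tilde v\rangle=\langle A_*^{1/2}v,A_*^{1/2}\tilde v\rangle$ into $-i\lambda\|A_*^{1/2}v\|^2$ plus lower-order remainders; testing the resolvent equations against the corresponding velocity component and extracting imaginary parts then produces the desired $|\lambda|$-weighted norm.

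For item (i), I would test \eqref{Eq004AE} against $\rho_2\tilde v$ and substitute $A_*^{1/2}\tilde v=i\lambda A_*^{1/2}v-A_*^{1/2}f^3$ into $b_1\langle A_*v,\tilde v\rangle$. The resulting identity, after taking imaginary parts, reads
\begin{equation*}
\lambda\bigl(\rho_2\|\tilde v\|^2-b_1\|A_*^{1/2}v\|^2\bigr)=\mathrm{Im}\bigl\{b_1\langle A_*^{1/2}v,A_*^{1/2}f^3\rangle+\kappa_1\langle u_x-v,\tilde v\rangle+\gamma_1\langle\tilde u_x-\tilde v,\tilde v\rangle+\rho_2\langle f^4,\tilde v\rangle\bigr\}.
\end{equation*}
Each right-hand term is controlled by $C\|F\|\|U\|$ using \eqref{dis-10}, the bounds $\|\tilde v\|\le C\|U\|$ and $\|\tilde v\|^2\le C\|F\|\|U\|$ from \eqref{Estima01DExp}, the estimates of Lemma \ref{Lemma2}, and the consequence $\|U\|\le C\|F\|$ of Theorem \ref{TDExponential}. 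To disentangle $|\lambda|\|\tilde v\|^2$ from $|\lambda|\|A_*^{1/2}v\|^2$, I would combine this difference estimate with the relation $|\lambda|\|A_*^{\alpha/2}v\|\le\|A_*^{\alpha/2}\tilde v\|+\|A_*^{\alpha/2}f^3\|$ obtained by applying $A_*^{\alpha/2}$ to \eqref{Eq003AE} and interpolate up to $A_*^{1/2}$ via Lions' inequality \eqref{ILLK}, using the top-order norm $\|A_*v\|$ read off from \eqref{Eq004AE}. Item (ii) is proved symmetrically starting from \eqref{Eq008AE}.

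For items (iii) and (iv), I would test \eqref{Eq002AE} against $\rho_1\tilde u$. The Kelvin--Voigt viscous term $\gamma_1\langle A\tilde u,\tilde u\rangle=\gamma_1\|\tilde u_x\|^2$ is real and drops out of the imaginary part, while the stiffness term $\kappa_1\langle Au,\tilde u\rangle$, after using $i\lambda u=\tilde u+f^1$, contributes $-\kappa_1\|\tilde u_x\|^2/\lambda$ plus an $f^1$-remainder of order $\|F\|\|\tilde u_x\|/|\lambda|$. The coupling $\kappa_1\langle v_x,\tilde u\rangle=-\kappa_1\langle v,\tilde u_x\rangle$ is handled by $v=(\tilde v+f^3)/(i\lambda)$ together with item (i), while $\gamma_1\langle\tilde v_x,\tilde u\rangle=-\gamma_1\langle\tilde v,\tilde u_x\rangle$ is controlled by $\|\tilde v\|\|\tilde u_x\|$ using \eqref{dis-10} and \eqref{Estima03Dexp}. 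Collecting the imaginary parts and invoking Young's inequality to absorb residual $\|F\|^2/|\lambda|$ contributions produces $|\lambda|\|\tilde u\|^2\le C\|F\|\|U\|$; (iv) follows analogously from \eqref{Eq006AE}.

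The hard part will be handling the fractional regime $\alpha\in[0,1)$ (respectively $\beta\in[0,1)$) in items (i) and (ii), where \eqref{dis-10} only delivers $\|A_*^{\alpha/2}\tilde v\|^2\le C\|F\|\|U\|$ while the duality $\langle A_*v,\tilde v\rangle$ a priori requires control of $\|A_*^{1/2}\tilde v\|$. The remedy is to interpolate $\|A_*^{1/2}v\|$ between the low-order dissipative quantity $\|A_*^{\alpha/2}v\|$ (which inherits an $|\lambda|^{-1}$ weight through \eqref{Eq003AE}) and the top-order quantity $\|A_*v\|$ read off directly from \eqref{Eq004AE}, with the Lions exponents chosen so that the product of powers of $\|F\|$ and $\|U\|$ simplifies to $\|F\|\|U\|$ up to a remainder of the form $C\|F\|^2/|\lambda|$, which is absorbable in the final step of the analyticity argument. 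This uniform exponent bookkeeping across the whole square $(\alpha,\beta)\in[0,1]^2$ is the technical heart of the proof and explains why analyticity persists beyond the classical regime $\alpha=\beta=1$.
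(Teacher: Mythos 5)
Your items (iii)--(iv) are sound and essentially reproduce the paper's own argument (the paper also tests \eqref{Eq002AE} against $\tilde u$; your variant of converting the stiffness term into $\kappa_1\|\tilde u_x\|^2/\lambda$ via $u=(\tilde u+f^1)/(i\lambda)$ and invoking \eqref{Estima03Dexp} is if anything cleaner than the paper's detour through $|\lambda|\|A^{\frac12}u\|^2$, though both versions lean on item (i) for the coupling terms). The decisive divergence is in items (i)--(ii), and here your computation is the correct one. The paper pairs \eqref{Eq003AE} with $A_*v$, substitutes \eqref{eqIIlema09}, and announces the \emph{sum} identity $\lambda\bigl[\|A_*^{\frac12}v\|^2+\tfrac{\rho_2}{b_1}\|\tilde v\|^2\bigr]=\mathrm{Im}\{\cdots\}$, which needs no disentangling. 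But with the paper's own inner product $\dual{u}{w}=\int_0^l u\overline{w}\,dx$, conjugate-linear in the second slot, one has $\dual{\tilde v}{-i\lambda\tilde v}=+i\lambda\|\tilde v\|^2$, not $-i\lambda\|\tilde v\|^2$; carried out correctly, the paper's manipulation yields precisely your equipartition \emph{difference} $\lambda\bigl(\rho_2\|\tilde v\|^2-b_1\|A_*^{\frac12}v\|^2\bigr)=\mathrm{Im}\{\cdots\}$, as it must, since the imaginary part of such pairings measures the kinetic/potential defect. So the ``hard part'' you flag is not an artifact of your route: it is the actual crux, and the paper crosses it only through this conjugation slip.

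Unfortunately your proposed repair does not close the gap, for three concrete reasons. First, $\|A_*v\|$ is not individually available: the domain \eqref{DOMINIOBcorr} only places the combination $b_1v+\gamma_2A_*^{\alpha-1}\tilde v$ in $D(A_*)$, and reading $A_*v$ off \eqref{Eq004AE} costs $\|A_*^{\alpha}\tilde v\|$, whereas \eqref{dis-10} controls only $\|A_*^{\alpha/2}\tilde v\|$. Second, even granting the formal bound $\|A_*v\|\lesssim|\lambda|\|\tilde v\|+\cdots\lesssim|\lambda|(\|F\|_\mathcal{H}\|U\|_\mathcal{H})^{\frac12}$ via \eqref{Estima01DExp}, the Lions exponents in \eqref{ILLK} do not cooperate: with $\tfrac12=\theta_1\tfrac{\alpha}{2}+\theta_2$, $\theta_1+\theta_2=1$, i.e. $\theta_2=\tfrac{1-\alpha}{2-\alpha}$, combining $\|A_*^{\alpha/2}v\|\lesssim|\lambda|^{-1}\bigl((\|F\|_\mathcal{H}\|U\|_\mathcal{H})^{\frac12}+\|F\|_\mathcal{H}\bigr)$ with the top bound gives $|\lambda|\|A_*^{\frac12}v\|^2\lesssim|\lambda|^{(2-3\alpha)/(2-\alpha)}\|F\|_\mathcal{H}\|U\|_\mathcal{H}$, a positive power of $|\lambda|$ for all $\alpha<\tfrac23$. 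Third, no bookkeeping can succeed on the whole square, because the statement itself fails for small $\alpha$: at $\alpha=0$, for the decoupled beam of Section 6 (equivalently, symmetric modes $u=y$, $v=z$ of the full system with equal coefficients), the ansatz $u=a\sin(\sigma_nx)$, $v=b\cos(\sigma_nx)$ yields the dispersion relation $\rho_2\mu^2+b_1\sigma_n^2+\gamma_2\mu+\rho_1\mu^2(\kappa_1+\gamma_1\mu)/\bigl(\rho_1\mu^2+(\kappa_1+\gamma_1\mu)\sigma_n^2\bigr)=0$, whose bending branch satisfies $\mu_n=-\tfrac{\gamma_2}{2\rho_2}+i\sqrt{b_1/\rho_2}\,\sigma_n+O(\sigma_n^{-1})$: the real parts stay bounded while $|\mathrm{Im}\,\mu_n|\to\infty$, so $|\lambda|\,\|(i\lambda I-\mathbb{B})^{-1}\|_{\mathcal{L}(\mathcal{H})}$ blows up and item (i) (which, with Lemmas \ref{Lemma1} and \ref{Lemma10}, would imply \eqref{Analyticity}) cannot hold. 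This is consistent with the if-and-only-if criterion of \cite{AMJR2016} requiring Kelvin--Voigt damping in the bending moment as well. In short: your skeleton is the right one for large $\alpha,\beta$ (your exponents close for $\alpha\ge\tfrac23$, and a sharper accounting should reach $\alpha\ge\tfrac12$), but the lemma as stated on all of $[0,1]^2$ is not provable, and the obstruction you honestly identified is exactly the point where the paper's own proof is unsound.
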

\begin{proof} $(i)-(ii)$ Making use of the duality product between equation \eqref{Eq003AE} and $A_*v$, we have
\begin{eqnarray}\label{eq*lema09}
 i\lambda\|A_*^\frac{1}{2}v\|^2 =\dual{\tilde{v}}{A_*v}+\dual{f^3}{A_*v}=\dual{\tilde{v}}{A_*v}+\dual{A_*^\frac{1}{2}f^3}{A_*^\frac{1}{2}v}.
\end{eqnarray}
Due to \eqref{Eq004AE}, it follows that 
\begin{eqnarray}
A_*v =\dfrac{\rho_2}{b_1}f^4 -i\lambda \dfrac{\rho_2}{b_1}\tilde{v}+\dfrac{\kappa_1}{b_1}(u_x-v)+\dfrac{\gamma_1}{b_1}(\tilde{u}_x-\tilde{v})-\dfrac{\gamma_2}{b_1}A^\alpha_*\tilde{v}. \label{eqIIlema09}
\end{eqnarray}
Substituting \eqref{eqIIlema09} into \eqref{eq*lema09}, we obtain
\begin{eqnarray*}
 i\lambda\|A_*^\frac{1}{2}v\|^2 = \dfrac{\rho_2}{b_1}\dual{\tilde{v}}{f^4} -i\lambda \dfrac{\rho_2}{b_1}\|\tilde{v}\|^2+\dfrac{\kappa_1}{b_1}\dual{\tilde{v}}{u_x-v}+\dfrac{\gamma_1}{b_1}\dual{\tilde{v}}{\tilde{u}_x-\tilde{v}}-\dfrac{\gamma_2}{b_1}\|A^\frac{\alpha}{2}_*\tilde{v}\|^2 +\dual{A_*^\frac{1}{2}f^3}{A_*^\frac{1}{2}v},
\end{eqnarray*}
whence
\begin{eqnarray*}
 \lambda\big[\|A_*^\frac{1}{2}v\|^2+ \dfrac{\rho_2}{b_1}\|\tilde{v}\|^2\big] =Im\Bigg\{ \dfrac{\rho_2}{b_1}\dual{\tilde{v}}{f^4} +\dfrac{\kappa_1}{b_1}\dual{\tilde{v}}{u_x-v}+\dfrac{\gamma_1}{b_1}\dual{\tilde{v}}{\tilde{u}_x-\tilde{v}} +\dual{A_*^\frac{1}{2}f^3}{A_*^\frac{1}{2}v}\Bigg\}.
\end{eqnarray*}
By the Cauchy-Schwarz and Young inequalities with $\varepsilon>0$, we reduce it to the inequality
\begin{eqnarray*}
 |\lambda|\|A_*^\frac{1}{2}v\|^2+ |\lambda|\|\tilde{v}\|^2 & \leq &
  C\Bigg\{\|\tilde{v}\|\|f^4\| +\|A_*^\frac{1}{2}f^3\|\|A_*^\frac{1}{2}v\|\Bigg\}\\
 & & +\varepsilon\|\tilde{v}\|^2+C_\varepsilon\|{u_x-v}\|^2+\varepsilon\|\tilde{v}\|^2+C_\varepsilon\|\tilde{u}_x-\tilde{v}\|^2.
\end{eqnarray*}
Taking into account \eqref{dis-10} and the Theorem \ref{TDExponential}, we conclude
$$|\lambda|\|A_*^\frac{1}{2}v\|^2 + |\lambda|\|\tilde{v}\|^2 \leq  C_\delta\|F\|_\mathcal{H}\|U\|_\mathcal{H}.$$
This completes the proof of item $(i)$. Acting as by the proof of item $(i)$ with \eqref{Eq007AE} and  \eqref{Eq008AE} instead of \eqref{Eq003AE} and \eqref{Eq004AE}, we have the prove of item $(ii)$.
\\
{\bf Proof:} $(iii)-(iv)$ Taking the duality product between \eqref{Eq002AE} and         $\tilde{u}$, we have
  \begin{equation}\label{Eq01ItemiiiL5}
 i\lambda\|\tilde{u}\|^2=-\dfrac{\kappa_1}{\rho_1}\dual{Au}{\tilde{u}}-\dfrac{\kappa_1}{\rho_1}\dual{v_x}{\tilde{u}}+\dfrac{m}{\rho_1}\dual{y-u}{\tilde{u}}-\dfrac{\gamma_1}{\rho_1}\|A^\frac{1}{2}\tilde{u}\|^2-\dfrac{\gamma_1}{\rho_1}\dual{\tilde{v}_x}{\tilde{u}}+\dual{f^2}{\tilde{u}}.
 \end{equation}
 Due to \eqref{Eq001AE}, \eqref{Eq003AE}, we find
 \begin{equation}\label{Eq02ItemiiiL5}
 \dual{Au}{\tilde{u}}=\dual{Au}{i\lambda u-f^1}=-i\lambda\|A^\frac{1}{2}u\|^2-\dual{A^\frac{1}{2}u}{A^\frac{1}{2} f^1}
\end{equation}
and
\begin{equation}\label{k}
  \dual{\tilde{v}_x}{\tilde{u}}=\dual{i\lambda v_x-f^3_x}{\tilde{u}}=i\dual{\sqrt{|\lambda|}v_x}{\dfrac{\lambda}{\sqrt{|\lambda|}}\tilde{u}}-\dual{f^3_x}{\tilde{u}}.
  \end{equation}
Substituting \eqref{Eq02ItemiiiL5}, \eqref{k} into \eqref{Eq01ItemiiiL5} and taking imaginary part, we get
 \begin{multline*}
 \lambda\|\tilde{u}\|^2=\dfrac{\kappa_1}{\rho_1}\lambda\|A^\frac{1}{2}u\|^2+\text{Im}\bigg\{ \dfrac{\kappa_1}{\rho_1}\dual{A^\frac{1}{2} u}{A^\frac{1}{2}f^1}-\dfrac{\kappa_1}{\rho_1}\dual{v_x}{\tilde{u}}+\dfrac{m}{\rho_1}\dual{y-u}{\tilde{u}}\\
 -i\dfrac{\gamma_1}{\rho_1}\dual{\sqrt{|\lambda|}v_x}{\dfrac{\lambda}{\sqrt{|\lambda|}}\tilde{u}}+\dfrac{\gamma_1}{\rho_1}\dual{f^3_x}{\tilde{u}}+\dual{f^2}{\tilde{u}}\bigg\}.
 \end{multline*}
By the Cauchy-Schwarz and Young inequalities with $\varepsilon>0$, we estimate
  \begin{multline*}
 |\lambda|\|\tilde{u}\|^2\leq \dfrac{\kappa_1}{\rho_1}|\lambda|\|A^\frac{1}{2}u\|^2+C\bigg\{ \|A^\frac{1}{2} u\| \|A^\frac{1}{2}f^1\|+\|v_x\|^2+\|\tilde{u}\|^2+\|y-u\|^2
+\|f^3_x\|\|\tilde{u}\|+\|f^2\|\|\tilde{u}\| \bigg\}\\+C_\varepsilon |\lambda|\|v_x\|^2+\varepsilon |\lambda|\|\tilde{u}\|^2.
 \end{multline*} 
Taking into accout the inequalities
$$\|A^\frac{1}{2}u\|=\|u_x\|,\quad \|u_x\|\leq \|u_x-v\|+\|v\|\leq C\|U\|_\mathcal{H},\quad \|v_x\|=\|A^\frac{1}{2}_* v\|$$
and the item $(i)$ this Lemma,  it follows that
$$|\lambda|\|\tilde{u}\|^2 \leq  C_\delta \|F\|_\mathcal{H}\|U\|_\mathcal{H}$$
and the item $(iii)$ is thereby proved. The prove of item $(iv)$ can be proved as in item $(iii)$ with \eqref{Eq005AE}, \eqref{Eq006AE}, \eqref{Eq007AE} instead
of \eqref{Eq001AE}, \eqref{Eq002AE}, \eqref{Eq003AE}. Consequently, the Lemma \ref{Lemma9} is proved.
\end{proof}

\begin{lemma}\label{Lemma10}
Suppose $\alpha,\beta$ belongs to $[0,1]$. Given a real number $\delta>0$, there exists a constant $C_\delta>0$ such that the solutions of the problem \eqref{Eq1.1}-\eqref{Eq1.9}  satisfy the following inequalities	
\begin{eqnarray*}
\label{Item01Lemma10}
(i)\quad |\lambda|\|u_x-v\|^2  & \leq & C_\delta \|F\|_\mathcal{H}\|U\|_\mathcal{H},\\
\label{Item02Lemma5}
(ii)\quad |\lambda|\|y_x-z\|^2 & \leq & C_\delta \|F\|_\mathcal{H}\|U\|_\mathcal{H},
\end{eqnarray*}
for all $|\lambda|>\delta$.
\end{lemma}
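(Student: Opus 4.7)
The plan is to reduce both estimates to the dissipation bound \eqref{dis-10}, which already gives $\|\tilde{u}_x-\tilde{v}\|^2+\|\tilde{y}_x-\tilde{z}\|^2\leq C\|F\|_{\mathcal{H}}\|U\|_{\mathcal{H}}$, by combining the first-order resolvent equations for the position variables. The key observation is that although the dissipation controls only the velocities of the shear strains, the first-order equations \eqref{Eq001AE}, \eqref{Eq003AE}, \eqref{Eq005AE}, \eqref{Eq007AE} convert $\tilde{u}_x-\tilde{v}$ and $\tilde{y}_x-\tilde{z}$ directly into $i\lambda(u_x-v)$ and $i\lambda(y_x-z)$ modulo data terms.

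For item $(i)$, I would differentiate \eqref{Eq001AE} in $x$ (using that $u\in D(A)$ implies $u_x\in H^1$) and subtract \eqref{Eq003AE} to obtain the identity
\begin{equation*}
i\lambda(u_x-v)=(\tilde{u}_x-\tilde{v})+(f^1_x-f^3).
\end{equation*}
Taking the duality product with $u_x-v$ in $D(A^0)$ and taking the modulus yields
\begin{equation*}
|\lambda|\,\|u_x-v\|^2\leq \|\tilde{u}_x-\tilde{v}\|\,\|u_x-v\|+\|f^1_x-f^3\|\,\|u_x-v\|.
\end{equation*}
Since $f^1\in D(A^{1/2})$ and $f^3\in D(A_*^{1/2})$, both $\|f^1_x\|$ and $\|f^3\|$ are controlled by $\|F\|_{\mathcal{H}}$, so the last term is bounded by $C\|F\|_{\mathcal{H}}\|U\|_{\mathcal{H}}$. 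For the cross term, apply Young's inequality with weight $|\lambda|$,
\begin{equation*}
\|\tilde{u}_x-\tilde{v}\|\,\|u_x-v\|\leq \frac{1}{2|\lambda|}\|\tilde{u}_x-\tilde{v}\|^2+\frac{|\lambda|}{2}\|u_x-v\|^2,
\end{equation*}
absorb the $\tfrac{|\lambda|}{2}\|u_x-v\|^2$ on the left, and use \eqref{dis-10} together with $|\lambda|>\delta>1$ to bound $\tfrac{1}{2|\lambda|}\|\tilde{u}_x-\tilde{v}\|^2\leq C_\delta\|F\|_{\mathcal{H}}\|U\|_{\mathcal{H}}$. This yields item $(i)$.

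Item $(ii)$ follows by exactly the same argument, replacing \eqref{Eq001AE}, \eqref{Eq003AE} by \eqref{Eq005AE}, \eqref{Eq007AE}: differentiate $i\lambda y-\tilde{y}=f^5$ in $x$, subtract $i\lambda z-\tilde{z}=f^7$, pair the resulting identity with $y_x-z$, and handle the cross term $\|\tilde{y}_x-\tilde{z}\|\,\|y_x-z\|$ via the same $|\lambda|$-weighted Young inequality, invoking the dissipation bound on $\|\tilde{y}_x-\tilde{z}\|^2$ from \eqref{dis-10}. I do not expect any serious obstacle here: both estimates are essentially algebraic consequences of the resolvent system plus the basic dissipation, and the structural virtue exploited is that the Kelvin--Voigt dampings $-\gamma_1(u_x-v)_{xt}$ and $-\gamma_3(y_x-z)_{xt}$ dissipate exactly the quantity that, up to factor $i\lambda$, equals the shear strain itself. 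The mild use of $|\lambda|>\delta>1$ at the end is what gives the gain of one power of $|\lambda|$ over the merely bounded estimate obtained in Lemma \ref{Lemma2}.
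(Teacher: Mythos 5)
Your proof is correct and takes essentially the same route as the paper: the identical identity $i\lambda(u_x-v)-(\tilde{u}_x-\tilde{v})=f^1_x-f^3$ paired with $u_x-v$, with the cross term absorbed by Young's inequality (yours $|\lambda|$-weighted, the paper's $\varepsilon$-weighted --- a cosmetic difference) and the dissipation bound \eqref{dis-10} finishing the estimate. One minor remark: the differentiation of \eqref{Eq001AE} is justified not by ``$u\in D(A)$'' (the domain \eqref{DOMINIOBcorr} only gives $\kappa_1 u+\gamma_1\tilde{u}\in D(A)$, not $u\in D(A)$ separately) but simply because \eqref{Eq001AE} holds in $D(A^{1/2})$, so both sides already possess $L^2$ derivatives in $x$.
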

\begin{proof}  Due to \eqref{Eq001AE},\eqref{Eq003AE}, we have
\begin{equation}\label{Eq03Lemma05}
i\lambda(u_x-v)-(\tilde{u}_x-\tilde{v})=f^1_x-f^3.
\end{equation} 
The duality product between \eqref{Eq03Lemma05}  and $u_x-v$ gives that
\begin{equation}
i\lambda\|u_x-v\|^2 = \dual{(\tilde{u}_x-\tilde{v})}{u_x-v}+\dual{f_x^1}{(u_x-v)}-\dual{f^3}{(u_x-v)}.
\end{equation}
By the Cauchy-Schwarz and Young inequalities with $\varepsilon>0$, we get
\begin{equation*} 
|\lambda|\|u_x-v\|^2\leq C_\varepsilon\|\tilde{u}_x-\tilde{v}\|^2+\varepsilon\|u_x-v\|^2+\|f_x^1\|\|u_x-v\|+\|f^3\|\|u_x-v\|.
\end{equation*}
Making use of \eqref{dis-10} for $|\lambda|>\delta$, we conclude the proof of item $(i)$. Using \eqref{Eq005AE}, \eqref{Eq007AE} instead of \eqref{Eq001AE}, \eqref{Eq003AE},  the prove of item $(ii)$ can be handled in the same maner as in the proof of item $(i)$.
\end{proof}
\begin{theorem}\label{AnaliticidadeS2}
The semigroup $S(t)=e^{t\mathbb{B}}$  is analytic for $(\alpha,\beta)\in [0, 1]^2$.
\end{theorem}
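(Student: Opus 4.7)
My plan is to verify the two hypotheses of Theorem \ref{LiuZAnalyticity}. Condition \eqref{RBEixoIm}, namely $i\mathbb{R} \subset \rho(\mathbb{B})$, is already established in Lemma \ref{EImaginary}, so the entire work concentrates on condition \eqref{Analyticity}. Passing to the resolvent equation $(i\lambda I - \mathbb{B})U = F$ for $F\in\mathcal{H}$ and $U\in\mathfrak{D}(\mathbb{B})$, as was done throughout Section \ref{3.1}, the bound \eqref{Analyticity} becomes equivalent to producing, for every $\delta>0$, a constant $C_\delta>0$ such that
\[
|\lambda|\,\|U\|^2_{\mathcal{H}} \leq C_\delta \|F\|_{\mathcal{H}}\|U\|_{\mathcal{H}} \quad \text{for all } |\lambda|>\delta.
\]

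Next, I would expand $\|U\|^2_\mathcal{H}$ via \eqref{NORM} into its nine ingredients and observe that $|\lambda|$ times each one has already been controlled by $C_\delta\|F\|_\mathcal{H}\|U\|_\mathcal{H}$ in earlier lemmas: Lemma \ref{Lemma1} handles $|\lambda|\|y-u\|^2$; Lemma \ref{Lemma10} handles $|\lambda|\|u_x-v\|^2$ and $|\lambda|\|y_x-z\|^2$; Lemma \ref{Lemma9} handles $|\lambda|\|\tilde u\|^2,\,|\lambda|\|\tilde v\|^2,\,|\lambda|\|\tilde y\|^2,\,|\lambda|\|\tilde z\|^2,\,|\lambda|\|A_*^{1/2}v\|^2,\,|\lambda|\|A_*^{1/2}z\|^2$. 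Summing these with the coefficients dictated by \eqref{NORM} yields the desired inequality, after which the conclusion follows immediately from Theorem \ref{LiuZAnalyticity}.

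The hardest step is not the assembly above but what is packaged inside Lemma \ref{Lemma9}: to obtain $|\lambda|\|\tilde v\|^2$ and $|\lambda|\|\tilde z\|^2$ uniformly in $\alpha,\beta\in[0,1]$. For small values of the fractional exponents the dissipation terms $\gamma_2 A_*^\alpha \tilde v$ and $\gamma_4 A_*^\beta \tilde z$ are too weak to close the estimate directly from \eqref{dis-10}, so a detour is required. The idea I would use is to solve \eqref{Eq004AE} algebraically for $A_* v$, substitute the result into the duality pairing of \eqref{Eq003AE} with $A_*v$, and take the imaginary part: the contributions $-i\lambda\frac{\rho_2}{b_1}\|\tilde v\|^2$ and $i\lambda\|A_*^{1/2}v\|^2$ then appear constructively on the left-hand side, while on the right only quantities already bounded by $\|F\|_\mathcal{H}\|U\|_\mathcal{H}$ through \eqref{dis-10} and the exponential-stability estimates of Section \ref{3.1} survive. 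The $z$-side is handled symmetrically through \eqref{Eq007AE}--\eqref{Eq008AE}. With $|\lambda|\|v_x\|^2$ and $|\lambda|\|z_x\|^2$ in hand, testing \eqref{Eq002AE} by $\tilde u$ and \eqref{Eq006AE} by $\tilde y$, taking imaginary parts, and using $\|A^{1/2}u\|=\|u_x\|\le C\|U\|_\mathcal{H}$ plus \eqref{E10HC} to rewrite $\|v_x\|=\|A_*^{1/2}v\|$, delivers $|\lambda|\|\tilde u\|^2$ and $|\lambda|\|\tilde y\|^2$. The uniformity across $(\alpha,\beta)\in[0,1]^2$ is exactly what allows the analyticity region to cover the entire square rather than just $[1/2,1]^2$, as in previous works.
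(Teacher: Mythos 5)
Your proposal is correct and follows essentially the same route as the paper's own proof: condition \eqref{RBEixoIm} via Lemma \ref{EImaginary}, and condition \eqref{Analyticity} by assembling Lemmas \ref{Lemma1}, \ref{Lemma9} and \ref{Lemma10} into the single estimate $|\lambda|\|U\|^2_{\mathcal{H}}\leq C_\delta\|F\|_{\mathcal{H}}\|U\|_{\mathcal{H}}$ for $|\lambda|>\delta$, then invoking Theorem \ref{LiuZAnalyticity}. Even your sketch of the delicate step inside Lemma \ref{Lemma9} (solving \eqref{Eq004AE} algebraically for $A_*v$, substituting into the pairing of \eqref{Eq003AE} with $A_*v$, and taking imaginary parts, with the symmetric treatment on the $z$-side) coincides with the paper's argument, so there is nothing genuinely different to compare.
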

 \begin{proof}
According to Lemma \ref{EImaginary}, $i\mathbb{R}\subset \rho(\mathbb{B})$ is verified.  Furthermore, combining Lemmas \ref{Lemma1}, \ref{Lemma9} and \ref{Lemma10}, given a real number $\delta>0$, there exists a constant $C_\delta>0$ such that the solutions of the system \eqref{Eq1.1}-\eqref{Eq1.9}  satisfy 
 \begin{equation*}\label{EquiAnalyticity2} 
 |\lambda|\|U\|^2_{\mathcal{H}}\leq C_\delta\|F\|_{\mathcal{H}}\|U\|_{\mathcal{H}}\qquad\text{for}\qquad (\alpha,\beta)\in [0, 1]^2
 \end{equation*}
and for all $|\lambda|>\delta$.
The proof of Theorem \ref{AnaliticidadeS2} is thereby complete.
\end{proof}
\section{New Analiticity Results for Timoshenko Systems}
The results presented in this work imply new analyticity results for Timoshenko systems: If we disregard the coupling (Van der Waals force $m(y-u)$) our system becomes a decoupled system of two identical Timoshenko systems. Let us consider only the first one:
\begin{eqnarray}
	\label{TEq1.1}
	\rho_1 u_{tt}-\kappa_1(u_x-v)_x-\gamma_1(u_x-v)_{xt}=0\quad{\rm in} \quad (0,l)\times (0,\infty),\\
	\label{TEq1.2}
	\rho_2v_{tt}-b_1v_{xx}-\kappa_1(u_x-v)-\gamma_1(u_x-v)_t+\gamma_2(-\partial_{xx})^\alpha    v_t=0\quad{\rm in} \quad (0,l)\times (0,\infty).
\end{eqnarray}
subject to boundary conditions 
\begin{equation}
	\label{EEq1.3}
	\quad u(0,t)=u(l,t)=v_x(0,t)=v_x(l,t)=0,\quad\quad t>0,
\end{equation}
and initial data
\begin{equation}
	\label{TEq1.4}
	u(x,0)=u_0(x),\; u_t(x,0)=u_1(x),\; v(x,0)=v_0(x),\; v_t(x,0)=v_1(x),\quad \quad x\in(0,l).
\end{equation}
The total energy $\mathfrak{E}_1\colon \mathbb{R}^+_0\to\mathbb{R}^+$ is given by    
\begin{equation*}\label{TEnergia01}
\mathfrak{E}_1(t)=\frac{1}{2}\bigg[ \rho_1 \|u_t\|^2+\rho_2\|v_t\|^2+ \kappa_1\|u_x-v\|^2+b_1\|A_*^{1/2}v\|^2 \bigg ](t), \quad t\geq 0.\\
\end{equation*}
Consequently,
\begin{equation*}
\dfrac{d}{dt}\mathfrak{E}_1(t)=-\gamma_1\|(u_x-v)_t\|^2 -\gamma_2\|A_*^{\alpha/2} v_t\|^2  \leq 0.
\end{equation*}
Consider 
\begin{equation*}
 \mathcal{H}_1:= D(A^\frac{1}{2})\times D(A^0) \times D(A_*^\frac{1}{2})\times D(A_*^0)
\end{equation*}
with inner product
\begin{equation*}
	\dual{ U_1}{U_2}_{\mathcal{H}}  :=  \rho_1\dual{\tilde{u_1}}{\tilde{u_2}}+\rho_2\dual{\tilde{v_1}}{\tilde{v_2}}+\kappa_1\dual{u_{1x}-v_1}{u_{2x}-v_2}
	+b_1\dual{A_{*}^{1/2}v_{1}}{A_{*}^{1/2}v_{2}},
\end{equation*}
for $U_i=(u_i,\tilde{u_i},v_i, \tilde{v_i})^T\in \mathcal{H}_1$,  $i=1,2$ and induced norm
\begin{equation}\label{TNORM}
	\|U\|_{\mathcal{H}_1}^2:=\rho_1 \|\tilde{u}\|^2+\rho_2\|\tilde{v}\|^2+ \kappa_1\|u_x-v\|^2+b_1\|A_*^{1/2}v\|^2.
\end{equation}
Condition \eqref{RBEixoIm} of Theorem \ref{LiuZAnalyticity} is proved analogously.  To prove condition \eqref{Analyticity}, it suffices to show that
\begin{equation*}
|\lambda|\|U\|_{\mathcal{H}_1}^2:=\rho_1|\lambda| \|\tilde{u}\|^2+\rho_2|\lambda|\|\tilde{v}\|^2+ \kappa_1|\lambda|\|u_x-v\|^2+b_1|\lambda|\|A_*^{1/2}v\|^2\leq C\|F\|_{\mathcal{H}_1}\|U\|_{\mathcal{H}_1}.
\end{equation*}
Acting as by the proof of items $(i)$ and $(iii)$ of Lemma  \ref{Lemma9} and item $(i)$ of Lemma \ref{Lemma10}, the desired result is easily obtained. In this way, we conclude that the Timoshenko System \eqref{TEq1.1}--\eqref{TEq1.4}, is also analytic for $\alpha\in[0,1]$.
\section{Conclusion}
We choose in this manuscript to divide the study of stability and regularity into sections 4 and 5 respectively, in order to illustrate that the theory of semigroups for linear operators can be applied to the study of both topics. It is known that analyticity of the semigroup $S(t)$ associated with the system \eqref{Eq1.1}--\eqref{Eq1.9}, implies the exponential decay of $S(t)$, as can be observed from the estimate \eqref{Analyticity} (  for $|\lambda|\geq 1$ if $|\lambda|\|U\|_\mathcal{H}\leq C_\delta\|F\|_\mathcal{H}$ then $\|U\|_\mathcal{H}\leq C_\delta\|F\|_\mathcal{H}$).

We impose the boundary conditions given by \eqref{Eq1.5}--\eqref{Eq1.6}. However, if we change these boundary conditions for only Dirichlet boundary conditions, given by
\begin{eqnarray}
	\label{Eq.1.5D}
	\quad u(0,t)=u(l,t)=v(0,t)=v(l,t)=0,\quad\quad t>0,\\
	\label{Eq.1.6D}
	y(0,t)=y(l,t)=z(0,t)=z(l,t)=0,\quad\quad t>0,
\end{eqnarray}
it will not be necessary to introduce the self-adjoint operators powers of the operator $A_*$, instead of these operators it will be enough to use the self-adjoint operators powers of the operator $-\Delta:=A\colon D(A)\cap D(A^0)\to D(A^0)$, with which the same analytical results obtained previously will be used.

{\bf Advances over previous works:}
\begin{itemize}
\item{Universality:} Analyticity does not depend on special restrictions, covering the entire parameter space.
\item{Robustness:} Stability and analyticity hold for weak and strong dissipative regimes alike.
\item{Physical Relevance:} Guarantees mathematical robustness for DWCNT models under realistic experimental conditions at the nanoscale.
\end{itemize}

Thus, we can conclude that this research represents a qualitative leap in the mathematical modeling of double-walled carbon nanotubes and Timoshenko Systems. Unlike previous results that required
strong dissipation or restricted parameter regions, this research establishes exponential stability and complete analyticity throughout the $[0,1]^2$ region. This robustness opens
perspectives for optimal control, numerical simulation, and extensions to thermoelastic
and retarded systems.



\begin{thebibliography}{99}

 \bibitem{AmmariShelTebou2022}
K.  Ammari,  F.   Shel  and L. Tebou,  Regularity of the semigroups associated with some damped coupled elastic systems II: A nondegenerate fractional damping case,  Mathematical  Methods in the Applied Sciences, (2022).

\bibitem{MH2019}
M. Astudillo and H. P. Oquendo, Stability Results for a Timoshenko System with a Fractional Operator in the Memory, Applied Mathematics \& Optimization, (2019), 1--29.

\bibitem{SCRT1990}
S. Chen and R. Triggiani, Gevrey Class Semigroups Arising From Elastic Systems With Gentle Dissipation: The Case $0<\alpha<\frac{1}{2}$, Proceedings of the American Mathematical Society, Volume 110, Number 2, Outober (1990), 401--415.

\bibitem{JC2007}
J. Claeyssen, R. D. Copetti, T. Tsukasan, Free vibrations en Euler-Bernoulli multi-span with interaction forces in carbon nanotubes continuum modeling, proceedings de 6th Brazilian Conference on Dynamics, control and their applications, (2007).

\bibitem{JC2008} 
J. Claeyssen, T. Tsukasan, R. D. Copetti, S. Vielmo, Eigenanalysis of multi-walled carbon nanotubes by using the impulse response, Proceedings in Applied Mathematics and Mechanics, John Wiley (2008).
 

\bibitem{DOroPata2016}
V. Danese, F. Dell'Oro and V. Pata, Stability analysis of abstract systems of Timoshenko type, Journal of Evolution Equations 16 (2016), 587--615.


\bibitem{EN2000}
K. J. Engel \& R. Nagel, One-parameter semigroups for linear evolution equations, Springer (2000).

\bibitem{Gearhart}
Gearhart. Spectral theory for contraction semigroups on
Hilbert spaces. Trans. Amer. Math. Soc. 236 (1978), 385--394.

\bibitem{AGuesmia24}
A. Guesmia, On The Well-Posedness and  Stability For Carbon  Nanotubes  As Coupled Two Timoshenko  Beams With  Frictional  Dampings, Journal of Applied Analysis and Computation, Volume 14, Number 5, October (2024), 2572--2621.

\bibitem{HDTR08}
G. L. Hornayak, J. Dutta, H. Tibbals,  A. K. Rao, Introduction to Nanoscience,CRC, Boca Rat\'on (2008).

\bibitem{JAIB2025}
M. Jomaa, T. El Arwadi, S. Israwi and S. Boulaaras, On the Stability of Timoshenko System With Foundation From the Classical and Second Spectrum Perspectives, Math. Meth. Appl. Sci., (2025), 1--26.

 \bibitem{Tebou2020} 
 V. Keyantuo, L. Tebou and M. Warma,  A Gevrey Class Semigroup for a Thermoelastic Plate Model with a Fractional Laplacian: Berween the Euler-Bernoulli and Kirchhoff Models. Discrete and Continuous Dynamical System,  Vol 40. Number 5, May (2020),  2875--2889. 
 
 \bibitem{KLiuH2021}
Z.  Kuang,  Z.  Liu, and H.   D.  F.   Sare,  Regularity analysis for an abstract thermoelastic system with inertial term, ESAIM:  Control, Optimisation, and Calculus of Variations,   S24, 27 (2021).

\bibitem{KLiuTebou2022}
Z. Kuang, Z. Liu, and L. Tebou,  Optimal semigroup regularity for velocity coupled elastic systems:  A degenerate fractional damping case,   ESAIM: Control, Optimisation, and Calculus of Variations 28, 46  (2022).
 
\bibitem{LT1998A} 
I.  Lasiecka and R. Triggiani,  Analyticity of thermo-elastic semigroups with free boundary conditions.   Ann. Scuola Norm. Sup. Pisa Cl. Sci. (4), 27 (1998),   457--482.

\bibitem{SL1991}
S. Lijima, Helical microtubules of graphitic carbon, Nature 354 (1991) 55--56. https://doi.org/10.1038/354056a0


\bibitem{LiuR95}  
Z.  Liu and M. Renardy, A note on the equations of thermoelastic plate,  Appl. Math. Lett.,  8,  (1995),  1--6.

\bibitem{LiuZ} 
Z. Liu and S. Zheng, Semigroups associated with dissipative
systems,  Chapman \& Hall CRC Research Notes in Mathematics,
Boca Raton, FL, 398 (1999).

\bibitem{AMJR2016}
A. Malacarne and J. E. M. Rivera, Lack of exponential stability to Timoshenko system with viscoelastic Kelvin-Voigt type, Zeitschrift f\"ur angewandte Mathematik und Physik ZAMP, 67 (2016), 01--10.

\bibitem{FLF2023}
F. B. R. Mendes, L. D. B. Sobrado, F. M. S. Su\'arez, Regularidade do Sistema de Timoshenko com Termoelasticidade do Tipo III e Amortecimento Fracion\'aio Regularity of the Timoshenko's System with Thermoelasticity of Type III and Fractional Damping, Atena Editora ISBN 97865, Book: Ciencias Exatas e da Terra: Teorias e Princ\'ipios 2, (2023), 1--29.

\bibitem{Nunes2015}
M. A. Nunes, Vigas de Timoshenko aplicadas para duplos nanotubos e Sistema El\'astico Poroso N\~ao Linear: An\'alise Assint\'oica e Num\'erica, Tese Doutorado, Universidade  Federal do Par\'a,  (2015),  121 pp.

\bibitem{Pazy}
A. Pazy.  Semigroup of Linear Operators and Applications to Partial Differential Equations, Springer, New York, (1983).

\bibitem{HC2019}
H. P. Oquendo and C. R. da Luz, Asymptotic behavior for Timoshenko systems with fractional damping, Asymptotic Analysis 1 (2019), 1--20.

\bibitem{Ramos2023CNTs}
A. J. A. Ramos, M. A. Rincon, R. L. R. Madureira and M. M. Freitas, Exponential Stabilization for Carbon Nanotubes Modeled as Timoshenko Beams With Thermoelastic Effects, ESAIM: Mathematical Modelling and Numerical Analysis, M2AN  57, (2023), 1171--1193.

\bibitem{Raposo2005}
C. A. Raposo, J. Ferreira, M. L. Santos and N. N. O. Castro, Exponential stability for the Timoshenko system with two weak dampings, Applied Mathematics Letters, 18, (2005), 535--541.

\bibitem{Ru2000}
C.Q. Ru, Column buckling of multiwalled carbon nanotubes with interlayer radial displacements. Phys. Rev. B 62 (2000),
16962--16967.

\bibitem{MDCL2023}
M. L. Santos, D. S. Almeida J\'unior, S. M. S. Cordeiro and R. F. C. Lobato, Double-Wall Carbono Nanotubes Model With Nonlinear Localized Damping: Asymptotic Stability, Advances in Differential Equations, Volume 28 Numbers 9-10 (2023), 752--777.

\bibitem{HSLiuRacke2019} 
H. D. F.  Sare, Z.  Liu, and R.  Racke,  Stability of abstract thermoelastic systems with inertial terms, Journal of Differential Equations,  Volume 267, Issue 12, 5 December (2019),   7085--7134.

\bibitem{SR2010}
M.A. Shubov and M. Rojas-Arenaza, Mathematical analysis of carbon nanotube model,
J. Compu. Appl. Math., 234 (2010), 1631--1636.

\bibitem{DaSilva}
C. M. da Silva, O Modelo de Timoshenko em Nanotubos de Carbono Duplos e o Efeito de Van der Waals,  Dissertação de Mestrado-PPGMAP da UFRGS, Porto Alegre (2009).

\bibitem{Suarez2023}
F. M. S. Su\'arez, Regularity for the Timoshenko system with fractional damping, Journal of Engineering Research, v.3, n. 29 (2023). 1--12.

\bibitem{Suarez2024}
F. M. S. Su\'arez, L. D. B. Sobrado, G. L. L. de Araujo and F. B. R. Mendes, Stability and regularity for double wall carbon nanotubes modeled as Timoshenko beams with thermoelastic effects and intermediate damping, Math. Meth. Appl. Sci. (2024), 1--29.

\bibitem{SBW2011}
C. Shen, A. R. M. Brozena and Y. Wang, Double-walled carbon nanotubes: challenges and opportunities. Nanocale 3 (2011), 503--518.

\bibitem{LTebou2013} 
L. Tebou, Uniform analyticity and exponential decay of the semigroup associated with a thermoelastic plate equation with perturbed boundary conditions,  C.  R.  Math.  Acad. Sci.  Paris, 351  (2013),   539--544.

\bibitem{Tebou2021}
L.  Tebou,  Regularity, and Stability for a plate model involving fractional rotational forces and damping,  Zeitschrift f\" ur  Angewandte Mathematik and Physik ZAMP, 72:158 (2021).  

\bibitem{Apalara2025}
T. A. Apalara and H. A. Almutairi, Well-posedness and stability of doublewall carbon nanotubes Timoshenko system under Lord-Shulman thermoelasticity. Discrete and Continuous Dynamical Systems - S. doi: 10.3934/dcdss.2025020. (2025), 01--16.

 \bibitem{Timoshenko1921}
  S. P. Timoshenko, On the correction for shear of the differential equation for transverse vibrations of prismatic bars. Philos. Mag. Ser. 41 (1921), 744--746.

 \bibitem{YM2002}
 J. Yoon, C. Q. Ru, A.  Mioduchowski, Noncoaxial resonance of an isolated multiwwallcarbon nanotube, Physical Review B, Vol. 66 (2002), 233--402.

\bibitem{YM2003}
J. Yoon, C. Q. Ru, A. Mioduchowski,  Vibration of an embedded multiwall carbon nanotube. Campos. Sci. Technol. 63 (2003), 1533--1542.

\bibitem{YM2003A}
J. Yoon, C. Q. Ru, A. Mioduchowski, Sound wave propagation in multiwall carbon nanotubes, J. Appl. Phys. 93 (2003), 4801--4806.

\bibitem{YM2004}
J. Yoon, C. Q. Ru, A. Mioduchowski, Timoshenko-beam effects on transverse wave propagation in carbon nanotubes.Compos. Part B: Eng. 35 (2004), 87--93.

\bibitem{YM2005}
J. Yoon, C. Q.  Mioduchowski, Terahertz Vibration of Short Carbon Nanotubes Modell as Timoshenko Beams, Journal of Applied Mechanics, Vol 72 (2005), 10--17.






\end{thebibliography}
\end{document}